\documentclass[12pt,a4paper]{article}
\usepackage{amssymb}
\usepackage{mathrsfs}
\usepackage{amsmath}
\usepackage{amsfonts,amsthm,amssymb}
\usepackage{graphics}
\usepackage{graphicx}
\usepackage{epstopdf}
\usepackage[]{caption2}
\usepackage{color}
\usepackage{float}
\usepackage[margin=1in]{geometry}
\usepackage{chngcntr}
\usepackage{standalone} 
\textheight=22cm \textwidth=16cm
\topmargin=0cm \oddsidemargin=0cm \evensidemargin=0cm
\theoremstyle{definition}
\newtheorem{definition}{Definition}
\newtheorem*{remark}{Remark}
\newtheorem{thm}{Theorem}  
\newtheorem{lem}[thm]{Lemma}

\begin{document}
	\captionsetup{font={footnotesize}}
	\title{A constructive characterization of uniformly $4$-connected graphs}
	
	\author{
		Xiang Chen$^a$, 
		\quad Shuai Kou$^a$,
		\quad Chengfu Qin$^b$,
		\quad Liqiong Xu$^c$, \\
		\quad Weihua Yang$^a$\footnote{Corresponding author. E-mail: ywh222@163.com,~yangweihua@tyut.edu.cn}\\
		\small $^a$Department of Mathematics, Taiyuan University of Technology, Taiyuan 030024, China\\  
		\small $^b$School of Mathematics and Statistics, Nanning Normal University, Nanning 530001, China\\
		\small $^c$School of Science, Jimei University, Xiamen 361021, China\\
	}
	
	\date{}
	
	\maketitle {\flushleft\bf Abstract:} {\small  A constructive characterization of the class of uniformly $4$-connected graphs is presented. The characterization is based on the application of graph operations to appropriate vertex and edge sets in uniformly $4$-connected graphs, that is, any uniformly $4$-connected graph can be obtained from $C_5^2$ or $C_6^2$ by a number of $\Delta_1^+$ or $\Delta_2^+$-operations to quasi $4$-compatible sets.}
	
	{\flushleft\bf Keywords}:
	uniformly $4$-connectivity; $\Delta_1^+$-operation; $\Delta_2^+$-operation; quasi 4-compatibility
	
	\section{Introduction}
	\hspace*{1.3em} In this paper, we only consider finite simple undirected graphs, with graph theoretic terminology used here                                                                                                                                                                                                                                                                                                                                                                                 following \cite{Bondy}.
	
	The class of $3$-connected graphs has been characterized by Tutte~\cite{Tutte} in his famous Wheel Theorem. A construction characterization of the class of minimally $3$-connected graphs was given by Dawes~\cite{Dawes}. In addition, the class of $4$-connected graphs has been characterized by Qin and Ding~\cite{Qin}. Yin~\cite{Yin} also gave a recursive construction method for $4$-connected graphs. Su et al.~\cite{Su} gave a construction method for $k$-connected graphs.
	
	A graph is said to be \emph{uniformly $k$-connected} if between any pair of vertices the maximum number of internally-disjoint paths is exactly $k$. The study of uniformly connected graphs originated with Beineke et al.~\cite{Beineke}, who first derived some properties of uniformly connected graphs and defined different operations to yield infinite families of uniformly connected graphs, but they by no means construct all such graphs. Subsequently, G\"{o}ring et al.~\cite{Goring} gave a construction characterization of the class of uniformly $3$-connected graphs, which requires the set of all $3$-connected and $3$-regular graphs as a starting set. Recently, Xu~\cite{Xu} improved the construction method for uniformly $3$-connected graphs, which requires only the graph $K_4$ as the staring set.

	In this paper, we will give a constructive characterization of the class of uniformly $4$-connected graphs. The Section $2$ of this paper presents several definitions and basic lemmas that will be used in the proof of the main result, which is determined in Section~$3$.

	\section{Preliminaries}
	
	\hspace*{1.3em} For a graph $G$, let $V(G)$ and $E(G)$ denote the vertex set and the edge set of $G$, respectively. Let $N_G(x)$ denote the \emph{neighborhood} of $x$ in $G$ where $x\in V(G)$, and let $V(e)$ denote the set of \emph{endpoints} of $e$ where $e\in E(G)$. For a $k$-connected graph $G$ and a minimum vertex cut $T$ of $G$, the union of the vertex sets of at least one but not of all components of $G-T$ is called a \emph{$T$-fragment}(or briefly a \emph{fragment}) of $G$. A fragment of $G$ is called an \emph{end} of $G$ if any of its proper subsets is not a fragment of $G$.
	
	\begin{definition}\label{def1}
		A path $p$ is a \emph{quasi $3$-circuit chording path}, if between the endpoints of a subpath $p^{\prime}$ of $p$, there exist three internally-disjoint paths except for $p^{\prime}$, and the three paths intersect $p$ only at the endpoints of $p^{\prime}$.
		A path is an \emph{$e^{+}$-quasi $3$-circuit chording path}, if it is not a quasi $3$-circuit chording path, but becomes one after adding the edge $e$.
		A \emph{quasi chord} of a cycle is a path $p$ which connects two non-adjacent vertices of the cycle where $p$ intersects the cycle only at the endpoints of $p$.   
	\end{definition}
	
	\begin{definition}\label{def2}
		Let $H$ be a $4$-connected graph. Let $X=\{a,b,c\}\subset V(H)$ and $Y=\{d,e,f\}\subset V(H)$ such that $\vert X\cap Y\vert \leqslant 2$ and $E(H[X]), E(H[Y])\neq \emptyset$. The \emph{$\Delta_1^{+}$-operation} and \emph{$\Delta_2^{+}$-operation} are defined as follows:

		\noindent \emph{$\Delta_1^{+}$-operation}. $\Delta_1^{+}(H)=(H-E_{X})+x+\{xp|p\in X\cup \{y\}\}$, where $x\notin V(H)$, $y\in V(H)\setminus X$, 
		
		\noindent\hspace{2.55cm} $E_X \subseteq E(H[X])$ and $E_X \neq \emptyset$ such that $\kappa(H-E_{X})=\kappa(\Delta_1^{+}(H)-x)\geqslant 3$.
		
		\noindent \emph{$\Delta_2^{+}$-operation}. $\Delta_2^{+}(H)=(H-E_{X}-E_{Y})+x+y+xy+\{xp|p\in X\}+\{yq|q\in Y\}$, where 
		
		\noindent\hspace{2.55cm} $x,y\notin V(H)$, $E_X \subseteq E(H[X])$, $E_Y \subseteq E(H[Y])$ and $E_X, E_Y \neq \emptyset$ such that
		
		\noindent\hspace{2.55cm} $\kappa(H-E_{X}-E_{Y})=\kappa(\Delta_2^{+}(H)-x-y)\geqslant 2$, moreover, if $\kappa(H-E_{X}-E_{Y})=$
		
		\noindent\hspace{2.55cm} $\kappa(\Delta_2^{+}(H)-x-y)= 2$, any end of $H-E_{X}-E_{Y}$ contains both a vertex 
		
		\noindent\hspace{2.55cm} in $X$ and a vertex in $Y$. 
	\end{definition}
	
	\begin{definition}\label{def3}
		Let $H$ be a $4$-connected graph. Let $X=\{a,b,c\}\subset V(H), y\in V(H)\setminus X$ and $Y=\{d,e,f\}\subset V(H)$ such that $\vert X\cap Y\vert \leqslant 2$ and $E(H[X]), E(H[Y])\neq \emptyset$. Let $E_X \subseteq E(H[X])$ and $E_Y \subseteq E(H[Y])$ such that $E_X, E_Y \neq \emptyset$. Let $K_X$ denote a triangle defined by the vertices $a,b,c$ and let $K_Y$ denote a triangle defined by the vertices $d,e,f$. Then a set $S$ of vertices and edges of $H$ is \emph{quasi $4$-compatible} if it conforms to one of the following two types:
		
		\noindent Type $(1)$. $S=\{v,e\vert v\in X\cup \{y\}, e\in E_X\}$, where no $uv$-path is a quasi $3$-circuit chording 
		
		\noindent\hspace{1.75cm} path of $H-E_X$ for $u\in X, v=y$ or $u, v\in V(e)$ where $u\neq v$ and $e\in E(K_X)\setminus E_X$.
		
		\noindent Type $(2)$. $S=\{v,e\vert v\in X\cup Y, e\in E_X\cup E_Y\}$, where $S$ conforms to the following two 
		
		\noindent\hspace{1.75cm} conditions simultaneously:
		
		\noindent\hspace{1.75cm} (i) No $uv$-path is a quasi $3$-circuit chording path of $H-E_X-E_Y$ for $u\in X-$ 
		
		\noindent\hspace{2.35cm} $Y, v\in Y-X$ or $u, v\in V(e)$ where $u\neq v$ and $e\in (E(K_X)\setminus E_X)\cup (E(K_Y)\setminus$
		
		\noindent\hspace{2.35cm} $E_Y)$, moreover, if $\vert X\cap Y\vert =2$, no $uv$-path is a quasi chord of any cycle of 
		
		\noindent\hspace{2.35cm} $H-E_X-E_Y$ for $u,v\in X\cap Y$ where $u\neq v$.
		
		\noindent\hspace{1.75cm} (ii) No $uv$-path is an $e^{+}$-quasi $3$-circuit chording path of $H-E_X-E_Y$ for $u$,
		
		\noindent\hspace{2.45cm} $v\in V(e_1), e\in (E(K_{Y})\setminus E(H[Y]))\cup E_{Y}$ where  $u\neq v$ and $e_{1}\in E(K_{X})\setminus E_X$,
		
		\noindent\hspace{2.45cm} or $u,v\in V(e_2), e\in (E(K_{X})\setminus E(H[X]))\cup E_{X}$ where $u\neq v$ and $e_{2}\in E(K_{Y})\setminus$ 
		
		\nopagebreak
		\noindent\hspace{2.45cm} $E_Y$.
	\end{definition}
	
	Yin~\cite{Yin} defined removable edges in $4$-connected graphs and proved the following results.

	\begin{definition}\label{def4}
		An edge $e$ of a $4$-connected graph $G$ is said to be a \emph{removable edge} if $G\ominus e$ is still $4$-connected, where $G\ominus e$ denotes the graph obtained from $G$ by deleting e to get $G-e$, and for any endpoint of $e$ with degree $3$ in $G-e$, say $x$, deleting $x$ and then adding edges between any pair of non-adjacent vertices in $N_{G-e}(x)$.
	\end{definition}
	
	\begin{lem}(\cite{Yin})\label{lem1}
		Let $G$ be a $4$-connected graph of order at least $7$  and $e=xy\in E(G)$. Then $e$ is non-removable if and only if there exists $S\subseteq V(G)$ with $\vert S\vert=3$ such that $G-e-S$ has exactly two components $A, B$ with $\vert A\vert \geqslant 2$ and $\vert B\vert \geqslant 2$, moreover $x\in A, y\in B$.
	\end{lem}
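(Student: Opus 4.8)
The plan is to prove both implications by tracking how the degree-$3$ suppression built into $G\ominus e$ interacts with small vertex cuts. Since $G$ is $4$-connected, every vertex has degree at least $4$, and by Definition~\ref{def4} an endpoint of $e$ is suppressed exactly when it has degree $3$ in $G-e$, i.e. degree $4$ in $G$. I would therefore distinguish three cases: (A) both $x$ and $y$ have degree at least $5$, so no suppression occurs and $G\ominus e=G-e$; (B) exactly one endpoint, say $x$, has degree $4$, so $G\ominus e=(G-x)+J_x$, where $J_x$ is the clique placed on $N_{G-e}(x)=\{u_1,u_2,u_3\}$; and (C) both endpoints have degree $4$, so $G\ominus e=(G-x-y)+J_x+J_y$ with $J_y$ the clique on $N_{G-e}(y)=\{w_1,w_2,w_3\}$. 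One elementary fact will be used repeatedly: a $3$-cut $S$ of $G-e$ must separate $x$ from $y$ into \emph{exactly} two components $A\ni x$ and $B\ni y$, since otherwise re-inserting $e$ would leave $S$ a cut of the $4$-connected graph $G$; moreover if one side is a single vertex, say $A=\{x\}$, then $\deg_G(x)=4$ and $S=N_{G-e}(x)$.

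For the implication assuming such an $S$ exists, I would show that $S$ survives suppression. As $A,B$ are the components of $(G-e)-S$, there are no $A$--$B$ edges, whence $N_{G-e}(x)\subseteq A\cup S$ and $N_{G-e}(y)\subseteq B\cup S$. Consequently the fill-in clique on $N_{G-e}(x)$ lies inside $A\cup S$ and that on $N_{G-e}(y)$ inside $B\cup S$, so neither adds an edge across $S$. Deleting $x$ from $A$ or $y$ from $B$ leaves both sides nonempty because $|A|,|B|\ge2$, so $S$ remains a cut of size at most $3$ in $G\ominus e$; as $|V(G\ominus e)|\ge|V(G)|-2\ge5$, this means $G\ominus e$ is not $4$-connected and $e$ is non-removable.

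For the converse, assume $G\ominus e$ is not $4$-connected. I would first argue $\kappa(G\ominus e)=3$. Adding edges to $G$ minus the suppressed vertices cannot lower connectivity below that of the vertex-deleted graph, giving $\kappa(G\ominus e)\ge2$, and a putative $2$-cut $T$ in case (C) is excluded by a bridge argument: the two sides of $(G\ominus e)-T$ would be joined in $G-T$ only through $e=xy$, making $xy$ a bridge of the $2$-connected graph $G-T$ on $|V(G)|-2\ge5$ vertices, which is impossible. Fixing a $3$-cut $T$ of $G\ominus e$, note $x,y\notin T$. I would then set $S=T$ and reconstruct $(G-e)-T$ from $(G\ominus e)-T$. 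Every component of $G-(\{x\}\cup T)$ (or $G-(\{x,y\}\cup T)$) contains a neighbour of a suppressed endpoint, because $G-T$ is connected; the clique $J_x$ fuses all components meeting $N_{G-e}(x)$ into one blob, and $J_y$ does the same for $N_{G-e}(y)$. Disconnectedness of $(G\ominus e)-T$ then forces exactly two blobs, and re-inserting $x$ (attached only to its surviving neighbours, now in one blob) and $y$ yields exactly two components $A\ni x$ and $B\ni y$ of $(G-e)-T$. To see $|A|,|B|\ge2$, I would rule out $T=N_{G-e}(x)$: if $\{u_1,u_2,u_3\}\subseteq T$ then $(G\ominus e)-T=(G-x)-\{u_1,u_2,u_3\}$ is connected, for otherwise $\{u_1,u_2,u_3\}$ would be a $3$-cut of $G$; hence $A$ contains $x$ and a surviving neighbour, and symmetrically for $B$, while a singleton side would force the corresponding endpoint to have degree at most $4$, against its hypothesis.

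The main obstacle is the forward direction in case (C), where two suppressions must be controlled at once. Proving $\kappa(G\ominus e)=3$ rather than $2$ is precisely where the hypothesis $|V(G)|\ge7$ enters, via the bridge argument, and the bookkeeping of how the two cliques $J_x,J_y$ merge the components of $G-\{x,y\}-T$ must be carried out carefully to guarantee that exactly two components survive, that $x$ and $y$ fall into different ones, and that neither side degenerates to a single vertex. All three size and separation claims ultimately reduce to invoking the $4$-connectivity of $G$ to forbid $N_{G-e}(x)$ or $N_{G-e}(y)$ from being a cut, so the crux is packaging these local contradictions cleanly across the cases.
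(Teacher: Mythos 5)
The paper offers no proof of Lemma~\ref{lem1} at all: it is imported as a known result of Yin~\cite{Yin} and used as a black box, so there is no in-paper argument to compare yours against, and your proposal must be judged on its own merits. On that basis it is essentially correct and would serve as a self-contained proof. The forward direction is complete: since $A$ and $B$ are the components of $(G-e)-S$, you correctly note $N_{G-e}(x)\subseteq A\cup S$ and $N_{G-e}(y)\subseteq B\cup S$, so the fill-in cliques of Definition~\ref{def4} add no edge between $A\setminus\{x\}$ and $B\setminus\{y\}$, and these sets are nonempty because $\vert A\vert,\vert B\vert\geqslant 2$; hence $S$ remains a $3$-cut of $G\ominus e$, a graph on at least $\vert V(G)\vert-2\geqslant 5$ vertices. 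The three pillars of the converse also check out: the lower bound $\kappa(G\ominus e)\geqslant 3$ when at most one endpoint is suppressed; the bridge argument excluding a $2$-cut $T$ when both endpoints are suppressed (the two sides of $(G\ominus e)-T$ can only meet through $xy$, which would be a bridge of the $2$-connected graph $G-T$); and the clique-fusion bookkeeping showing that a $3$-cut $T$ of $G\ominus e$ yields exactly two components of $(G-e)-T$ separating $x$ from $y$, with $N_{G-e}(x)\not\subseteq T$, $N_{G-e}(y)\not\subseteq T$ (or degree $\geqslant 5$ in $G$ for an unsuppressed endpoint) ruling out singleton sides.

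One step you assert without justification deserves an actual argument, because it is not immediate: ``note $x,y\notin T$.'' When neither endpoint is suppressed this is easy, since $x\in T$ would give $(G-e)-T=G-T$ and so a $3$-cut of the $4$-connected graph $G$. But in your case (B), where $x$ is suppressed and $d_G(y)\geqslant 5$, the vertex $y$ is present in $G\ominus e$ and you must rule out $y\in T$: if $y\in T$, then since $G-T$ is connected and the neighbours of $x$ outside $T$ all lie in $N_{G-e}(x)$, every component of $G-\{x\}-T$ meets $N_{G-e}(x)\setminus T$, whence the clique $J_x$ fuses all of them and $(G\ominus e)-T$ is connected, contradicting that $T$ is a cut. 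This is exactly your own clique-fusion idea, so the omission is cosmetic rather than structural; but as written, the reconstruction step that re-attaches $y$ to its blob silently presupposes it. A final remark on hypotheses: the place where $\vert V(G)\vert\geqslant 7$ is truly indispensable is not only the bridge argument but the very first move of the converse, namely that failure of $4$-connectivity of $G\ominus e$ (a graph on at least $5$ vertices) actually yields a cut of size at most $3$; for smaller orders the lemma is simply false --- in $K_5=C_5^2$ every edge is non-removable, yet no set $S$ with two components of size at least $2$ can exist.
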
 
	
	\begin{lem}(\cite{Yin})\label{lem2}
		A $4$-connected graph without removable edges is either $C_5^2$ or $C_6^2$.
	\end{lem}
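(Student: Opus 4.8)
The plan is to prove the equivalent statement that every $4$-connected graph of order at least $7$ possesses a removable edge, and then to settle the orders $5$ and $6$ by hand. For the small cases I would first observe that $C_5^2=K_5$ and $C_6^2=K_{2,2,2}$ (the octahedron) are both $4$-connected and $4$-regular, and check directly that deleting any edge together with the splitting prescribed in Definition~\ref{def4} either drops the order below $5$ or destroys $4$-connectivity, so that no edge is removable. A short enumeration of the $4$-connected graphs on $5$ and $6$ vertices then confirms that these are the only two such graphs without a removable edge.

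For the main case let $G$ be $4$-connected of order $n\geq 7$ and suppose, for contradiction, that every edge is non-removable. By Lemma~\ref{lem1}, each edge $e=xy$ admits a set $S_e\subseteq V(G)$ with $|S_e|=3$ such that $G-e-S_e$ has exactly two components $A_e,B_e$ with $x\in A_e$, $y\in B_e$ and $|A_e|,|B_e|\geq 2$. Note that $S_e\cup\{x\}$ and $S_e\cup\{y\}$ are then $4$-cuts of $G$, so $G$ is exactly $4$-connected and the sets $A_e,B_e$ are genuine fragments. Among all fragments arising in this way I would choose one, say $F$ with cut $S$ and associated edge $e=xy$ where $x\in F$, of minimum cardinality $m=|F|\geq 2$, and (refining the choice) take $F$ to be an \emph{end} in the sense of the preliminaries. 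Minimality forces strong local constraints: every vertex of $F\setminus\{x\}$ has all of its at least four neighbours inside $F\cup S$, while $x$ has exactly one neighbour, namely $y$, outside $F\cup S$; since $|S|=3$, this makes the subgraph induced on $F\cup S$ very dense relative to its size.

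From here the argument is to exhibit a removable edge within the subgraph induced on $F\cup S$, contradicting the hypothesis. Concretely I would begin with $m=2$, where $F=\{x,x'\}$ forces $x'$ to be adjacent to $x$ and to all three vertices of $S$; I would then test the edges joining $x'$ to $S$ and the edge $xx'$ by applying the criterion of Lemma~\ref{lem1} in the converse direction, showing that no $3$-set separates the endpoints of such an edge into two parts each of order at least $2$ unless $n\leq 6$. Larger $m$ are treated by the same local analysis, now using the end property of $F$ to prevent the separating triples of the edges interior to $F$ from yielding a fragment smaller than $F$. I expect the main obstacle to be precisely this interaction of distinct separating triples: the cuts $S_e$ and $S_f$ for two different edges may \emph{cross}, and controlling the four corner sets of such a crossing — ruling out that they create a fragment smaller than $F$ and showing instead that they pin $G$ down — is the technical heart of the proof. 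A submodularity (uncrossing) argument on the fragments, combined with the degree lower bound from $4$-connectivity and the degree upper bound coming from the smallness of $F$, should force $n\leq 6$ and thereby reduce everything to the small cases already settled, leaving only $C_5^2$ and $C_6^2$.
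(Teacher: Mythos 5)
First, a point of reference: the paper does not prove this lemma at all --- it is quoted from Yin~\cite{Yin} as a known result, so there is no in-paper argument to compare yours against. Your reduction of the statement is the right one (settle orders $5$ and $6$ by inspection, then show every $4$-connected graph of order at least $7$ has a removable edge), and your small-case analysis is essentially fine: $C_5^2=K_5$ and $C_6^2$ are $4$-regular, so deleting any edge forces the vertex-deletion step of Definition~\ref{def4} at both endpoints, dropping the order to $4$, while the remaining $4$-connected graphs on $6$ vertices ($K_6$, $K_6\setminus e$, $Oct^+$) each have an edge whose two endpoints keep degree at least $4$ after deletion, and this edge is removable.

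The genuine gap is in the main case, which is the entire content of the lemma. Everything after you fix the minimum fragment $F$ is a plan rather than a proof: the claims that testing edges inside $F\cup S$ ``shows no $3$-set separates\dots unless $n\leq 6$,'' and that an uncrossing argument ``should force $n\leq 6$,'' are precisely the theorem you are supposed to establish, and you give no argument for them --- you even flag the interaction of crossing separating triples as the expected technical heart and then leave it untouched. Even the base case $m=2$ is not actually finished: you describe which edges you would test but never derive the contradiction. There is also a structural flaw in the setup: you choose $F$ of minimum cardinality among fragments arising as $A_e$ for non-removable edges $e$, and then ``refine'' it to be an end of $G$. These two requirements can conflict --- an end of $G$ need not arise as a side of any edge-separation of the form in Lemma~\ref{lem1}, so the minimality and the end property cannot both be assumed of the same set without justification, and your later degree and density constraints lean on both. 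In short, the skeleton is plausible (it resembles the fragment-based arguments standard in this area), but the proof as written establishes only the easy small-order cases; the load-bearing step for $n\geq 7$, which in Yin's original paper requires a substantial case analysis of crossing cuts, is missing.
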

	
	For $k$-connected graphs, the following result was given in Su et al.~\cite{Su}.
	\begin{lem}(\cite{Su})\label{lem3}
		Let $H$ be a $k$-connected graph with $k\geqslant 3$, let $X=\{x_1,x_2,\cdots,x_{k-1}\}\subset V(H)$ and $Y=\{y_1,y_2,\cdots,y_{k-1}\}\subset V(H)$.
		
		(i) If $H[X]\cong K_{k-1}$, then $G_X=(H-E_{X})+x+\{xx_i|i=1,2,\cdots,k-1\}+xy$ is $k$-connected if and only if $\kappa(H-E_{X})=\kappa(G_X-x)\geqslant k-1$, where $E_X\subseteq E(H[X]), x\notin V(H), y\in V(H)-X$;
		
		(ii) If $H[X]\cong K_{k-1}$ and $H[Y]\cong K_{k-1}$, then $G_{XY}=(H-E_{X}-E_{Y})+x+y+xy+\{xx_i|i=1,2,\cdots,k-1\}+\{yy_i|i=1,2,\cdots,k-1\}$ is $k$-connected if and only if $\vert X\cap Y\vert \leqslant k-2, \kappa(H-E_{X}-E_{Y})=\kappa(G_{XY}-x-y)\geqslant k-2$, and, if $\kappa(H-E_{X}-E_{Y})=\kappa(G_{XY}-x-y)= k-2$, any end of $H-E_{X}-E_{Y}$ contains both a vertex in $X$ and a vertex in $Y$, where $E_X\subseteq E(H[X]), E_Y\subseteq E(H[Y])$, $x,y\notin V(H)$.
		
	\end{lem}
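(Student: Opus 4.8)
The plan is to prove both biconditionals by the same template, exploiting the elementary observation that $G_X-x=H-E_X$ and $G_{XY}-x-y=H-E_X-E_Y$, which we abbreviate as $H'$ (deleting the new vertices removes exactly the newly added edges). Consequently the stated equalities $\kappa(H-E_X)=\kappa(G_X-x)$ and $\kappa(H')=\kappa(G_{XY}-x-y)$ are automatic, and the real content of each statement is the displayed connectivity inequality together with, in part (ii), the end condition. Throughout I will use two standard facts: deleting a single vertex lowers connectivity by at most one, i.e. $\kappa(G-v)\geqslant\kappa(G)-1$; and every fragment of a $k$-connected graph contains an end (a minimal fragment), since a fragment of smallest cardinality contained in a given fragment is automatically inclusion-minimal among all fragments.

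For necessity in (i), since $G_X$ is $k$-connected and $G_X-x=H-E_X$, the first fact gives $\kappa(H-E_X)\geqslant k-1$. For sufficiency, assume $\kappa(H-E_X)\geqslant k-1$ and suppose, for contradiction, that $S$ is a cut of $G_X$ with $|S|\leqslant k-1$. If $x\in S$ then $S\setminus\{x\}$ is a cut of $G_X-x=H-E_X$ of size at most $k-2$, contradicting $\kappa(H-E_X)\geqslant k-1$. If $x\notin S$, then $x$ survives and is adjacent to $(X\cup\{y\})\setminus S\neq\emptyset$ (because $|X\cup\{y\}|=k>|S|$), so every component of $(H-E_X)-S$ meeting $X\cup\{y\}$ merges with $x$; a disconnection therefore forces a component $A$ of $(H-E_X)-S$ with $A\cap(X\cup\{y\})=\emptyset$. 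As the only edges deleted from $H$ lie inside $X$ and $A$ avoids $X$, the edges incident with $A$ agree in $H$ and $H-E_X$; thus $N_H(A)\subseteq S$ while $(X\cup\{y\})\setminus S$ lies outside $A\cup S$, so $S$ is a cut of $H$ of size $<k$, contradicting $\kappa(H)=k$.

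Part (ii) splits the same three ways. Necessity: if $|X\cap Y|=k-1$ then $X=Y$ and $X$ is a $(k-1)$-cut separating $\{x,y\}$ from $V(H)\setminus X$, so $k$-connectivity forces $|X\cap Y|\leqslant k-2$; deleting the two vertices $x,y$ gives $\kappa(H')\geqslant k-2$; and if $\kappa(H')=k-2$, a fragment $F$ of $H'$ avoiding $X$ (say) satisfies $N_{G_{XY}}(F)\subseteq N_{H'}(F)\cup\{y\}$, a cut of $G_{XY}$ separating $F$ of size at most $k-1$, so every end must meet $X$, and symmetrically $Y$. For sufficiency, take a cut $S$ of $G_{XY}$ with $|S|\leqslant k-1$. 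If $x,y\in S$ then $S\setminus\{x,y\}$ is a cut of $H'$ of size $\leqslant k-3<\kappa(H')$, impossible. If $x,y\notin S$, then $x,y$ (joined by $xy$) survive and absorb every component of $H'-S$ meeting $X\cup Y$, since $(X\cup Y)\setminus S\neq\emptyset$ as $|X\cup Y|\geqslant k>|S|$; a leftover component $A$ avoids $X\cup Y$, so $N_H(A)=N_{H'}(A)\subseteq S$ again yields a cut of $H$ of size $<k$, a contradiction.

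The main obstacle, and the only place the end condition is genuinely needed, is the remaining case where exactly one of $x,y$ lies in $S$, say $x\in S$, $y\notin S$. Writing $S'=S\setminus\{x\}\subseteq V(H')$ with $|S'|\leqslant k-2$, only $y$ (adjacent to $Y$) remains to reconnect $H'-S'$, so a component avoiding $Y$, even one meeting $X$, cannot be absorbed; moreover the simple ``$H$-cut'' argument fails precisely because such a component is incident with deleted edges of $E_X$, whence $N_H(A)\neq N_{H'}(A)$. Here one invokes the hypothesis: if $H'-S'$ is disconnected then $|S'|=k-2=\kappa(H')$, so $S'$ is a minimum cut and each component is a fragment containing an end; by the end condition every end meets $Y$, hence every component meets $Y\setminus S'$ and attaches to $y$, forcing $G_{XY}-S$ connected. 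The symmetric subcase $y\in S,\,x\notin S$ uses that every end meets $X$, and this asymmetry between $X$ and $Y$ is exactly why the hypothesis must require every end to meet \emph{both}.
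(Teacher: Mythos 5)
The paper never proves this lemma: it is quoted verbatim from Su, Guo and Xu \cite{Su}, so there is no internal argument to compare yours against, and your write-up is a genuine addition rather than a variant of anything in the text. Judged on its own, your proof is correct. The opening observation---that $G_X-x=H-E_X$ and $G_{XY}-x-y=H-E_X-E_Y$ are literally the same graphs, so the stated equalities of connectivities carry no content---is legitimate, and your case analysis on how a hypothetical small cut $S$ meets the new vertices is complete: when $S$ contains all new vertices you get a too-small cut of $H-E_X$ (resp.\ $H-E_X-E_Y$); when it contains none, the surviving new vertices absorb every component meeting $X$ (resp.\ $X\cup Y$), and a leftover component $A$ avoids those sets, so no deleted edge is incident with $A$ and $N_H(A)\subseteq S$ gives a cut of $H$ of size less than $k$; and the mixed case $x\in S$, $y\notin S$ is precisely where the end hypothesis enters, through the two facts you isolate (each component of $H'$ minus a minimum cut is a fragment, and every fragment contains an end, by taking a fragment of least cardinality inside it). Two spots are glossed but harmless: in the mixed case you should also dispose of the possibility that $H'-S'$ is connected yet $y$ attaches to nothing, which is ruled out since $\vert Y\vert=k-1>k-2\geqslant\vert S'\vert$; and the order requirements $\vert V(G_X)\vert, \vert V(G_{XY})\vert\geqslant k+1$ for $k$-connectedness deserve a word, though they are immediate. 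A side benefit of your argument worth keeping: it makes explicit that the end condition in (ii) is needed \emph{only} in the mixed case, and that necessity actually yields the stronger conclusion that every fragment (not just every end) of $H-E_X-E_Y$ meets both $X$ and $Y$.
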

	
	For removable edges of $4$-connected graphs, we have the following two conclusions. 
	
	\begin{lem}\label{lem4}
		Let $G$ be a $4$-connected graph with $\vert V(G)\vert \geqslant 7$ and let $R$ be a triangle of $G$. If $x\in V(R)$ with $d_G(x)\geqslant 5$, then there exists $y\in V(R)\setminus\{x\}$ such that $xy$ is removable.
	\end{lem}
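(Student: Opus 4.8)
The plan is to argue by contradiction: assume that neither $xy_1$ nor $xy_2$ is removable, where $V(R)=\{x,y_1,y_2\}$, and derive a contradiction with the $4$-connectivity of $G$. First I would apply Lemma~\ref{lem1} to each edge. Since $xy_1$ is non-removable and $|V(G)|\geqslant 7$, there is a set $S_1$ with $|S_1|=3$ such that $G-xy_1-S_1$ has exactly two components $A_1\ni x$ and $B_1\ni y_1$, both of order at least $2$; symmetrically a set $S_2$ for $xy_2$ gives components $A_2\ni x$, $B_2\ni y_2$. The first observation is that $y_2\in S_1$ and $y_1\in S_2$: the edges $xy_2,y_1y_2$ survive in $G-xy_1$, so if $y_2\notin S_1$ then $y_2$ would lie in the component of $x$ (as $y_2\sim x$) and in the component of $y_1$ (as $y_2\sim y_1$), which is impossible; the other containment is symmetric. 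Setting $T_i=S_i\cup\{x\}$, the deleted edge $xy_i$ becomes irrelevant once $x$ is removed, so each $T_i$ is a vertex cut of $G$ of size $4$, hence a minimum cut. I write $F_1=B_1,\ \overline{F_1}=A_1\setminus\{x\}$ and $F_2=B_2,\ \overline{F_2}=A_2\setminus\{x\}$, all nonempty, and recall the standard fact that every vertex of a minimum cut has a neighbour in each of its fragments, so $N_G(F_i)=N_G(\overline{F_i})=T_i$.

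The decisive structural fact I would establish next is a \emph{unique-neighbour property}: since $N_G(x)\subseteq\{y_1\}\cup S_1\cup(A_1\setminus\{x\})$ and $B_1$ meets this set only in $y_1$, the vertex $x$ has exactly one neighbour, namely $y_1$, in the fragment $F_1$; symmetrically $y_2$ is the unique neighbour of $x$ in $F_2$. Combining $d_G(x)\geqslant 5$ with $|S_i|=3$ then forces $x$ to have at least one neighbour in each of $\overline{F_1}$ and $\overline{F_2}$. Passing to the common refinement of the two cuts, i.e.\ the nine cells obtained from the partitions $F_1\,|\,T_1\,|\,\overline{F_1}$ and $F_2\,|\,T_2\,|\,\overline{F_2}$, the unique-neighbour property shows that $x$ has \emph{no} neighbour in any interior cell $F_1\cap F_2$, $F_1\cap\overline{F_2}$, $\overline{F_1}\cap F_2$; hence every neighbour of $x$ lies in $(T_1\cup T_2)\setminus\{x\}$ or in the far cell $\overline{F_1}\cap\overline{F_2}$, while $y_1\in F_1\cap T_2$, $y_2\in T_1\cap F_2$ and $x\in T_1\cap T_2$.

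From here I would uncross $T_1$ and $T_2$ using submodularity of the neighbourhood function, $|N_G(P\cap Q)|+|N_G(P\cup Q)|\leqslant|N_G(P)|+|N_G(Q)|$, applied to suitable fragments. The degree bound is exactly what makes the configuration genuinely crossing: since $y_1\notin T_1$ and $y_2\notin T_2$ one checks $|T_1\cap T_2|\leqslant 3$, and comparing the number of neighbours of $x$ with $|(T_1\cup T_2)\setminus\{x\}|=7-|T_1\cap T_2|$ shows that when $|T_1\cap T_2|$ is large the far cell $\overline{F_1}\cap\overline{F_2}$ must be nonempty. Once two opposite cells are nonempty, submodularity becomes an equality and produces a new minimum ($4$-element) cut along a diagonal, say $N_G(F_1\cap F_2)$; by the unique-neighbour property this cut cannot contain $x$ (as $x$ has no neighbour in $F_1\cap F_2$), and it avoids $y_1,y_2$ as well, so tracking which vertices of $T_1\cup T_2$ can belong to it yields a separating set of size at most $3$, contradicting the $4$-connectivity of $G$. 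An equivalent and perhaps cleaner route is to replace $T_1$ by the shifted cut $W=(T_1\setminus\{x\})\cup\{y_1\}=\{y_1,y_2,s_1,t_1\}$, which is legitimate precisely because $y_1$ is the only neighbour of $x$ in $F_1$ and $|F_1|\geqslant 2$; this moves $x$ off the cut and sets up the same uncrossing against the analogous $W'$ obtained from $T_2$.

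I expect the main obstacle to be the uncrossing case analysis itself: one must organise it according to $|T_1\cap T_2|\in\{1,2,3\}$ and the emptiness or nonemptiness of the four corner cells, and in each branch verify that the corner sets to which submodularity is applied are nonempty (so that their neighbourhoods have size at least $4$) and that the resulting diagonal cut really does shrink below size $4$. The degenerate branches — when a fragment such as $\overline{F_1}$ is a single vertex, or when the two cuts are nested rather than crossing — will have to be handled separately, in each case exploiting $|B_i|\geqslant 2$, the triangle $R$, and the unique-neighbour property to manufacture the forbidden $3$-separator.
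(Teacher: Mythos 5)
Your setup is sound and coincides with the paper's: assume both $xy_1$ and $xy_2$ are non-removable, apply Lemma~\ref{lem1} to each, and observe that the triangle forces $y_2\in S_1$ and $y_1\in S_2$ (this is exactly the paper's first step). Your unique-neighbour property, the lifting to the $4$-cuts $T_i=S_i\cup\{x\}$, the fact that $x$ has no neighbour in the three interior cells, and the bound $\vert T_1\cap T_2\vert\leqslant 3$ are all correct. Your opposite-corner mechanism also genuinely works, in the following form: if, say, $F_1\cap F_2\neq\emptyset$ and $\overline{F_1}\cap\overline{F_2}\neq\emptyset$, then both neighbourhoods are cuts of size at least $4$, submodularity forces $N_G(F_1\cap F_2)=(F_1\cap T_2)\cup(T_1\cap T_2)\cup(T_1\cap F_2)$ with both sides of size exactly $4$, and then $x\in T_1\cap T_2\subseteq N_G(F_1\cap F_2)$ contradicts the unique-neighbour property. (Your side claim that this diagonal cut ``avoids $y_1,y_2$ as well'' is unjustified --- $y_1\in F_1\cap T_2$ may perfectly well have neighbours in $F_1\cap F_2$ --- but it is also unnecessary.)

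The genuine gap is that this equality mechanism is your only concrete route to a contradiction, and it applies only when two \emph{opposite} corners are nonempty; it says nothing in the configurations where every opposite pair has an empty member, and those are not ``degenerate branches'' but the heart of the proof. Indeed, the configuration the paper actually has to defeat is of this kind: in the paper's notation it shows $A\cap D=B\cap D=\emptyset$ (both corners on the $D$-side empty), and the contradiction then comes from $D=S\cap D$ being a single vertex against $\vert D\vert\geqslant 2$ --- i.e.\ from counting built on $\vert S\vert=\vert T\vert=3$, $d_G(x)\geqslant 5$ and the two-sided size conditions of Lemma~\ref{lem1}, not from any uncrossing equality. To finish along your lines one must enumerate the corner patterns allowed by your two exclusion rules (at most two nonempty corners, necessarily adjacent) and, in each pattern, combine the degree count $d_G(x)\leqslant 2+\vert S_1\cap S_2\vert+\vert S_1\cap\overline{F_2}\vert+\vert S_2\cap\overline{F_1}\vert+\vert\overline{F_1}\cap\overline{F_2}\vert$ with bounds on $\vert N_G(\cdot)\vert$ of the individual nonempty corners (using that $x$ never lies in these neighbourhoods) and with $\vert F_i\vert\geqslant 2$. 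This does close every case, so your plan is completable; but none of that counting appears in your write-up --- you explicitly defer it as ``the main obstacle'' --- and it is precisely the content of the paper's Claims 1--3 and final paragraph. As it stands, the proposal is a correct framework with the decisive argument missing.
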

	
	\begin{proof}
		Let $V(R)\setminus\{x\}=\{y_1,y_2\}$. Suppose $xy_1$ and $xy_2$ are both non-removable. Then by Lemma~\ref{lem1}, there exists $S\subseteq V(G)$ with $\vert S\vert=3$ such that $G-xy_1-S$ has exactly two components $A, B$ with $\vert A\vert \geqslant 2$ and $\vert B\vert \geqslant 2$, by symmetry we assume that $x\in A$ and $y_1\in B$. Moreover, there exists $T\subseteq V(G)$ with $\vert T\vert=3$ such that $G-xy_2-T$ has exactly two components $C, D$ with $\vert C\vert \geqslant 2$ and $\vert D\vert \geqslant 2$, by symmetry we assume that $x\in C$ and $y_2\in D$.
		
		Since $R$ is a triangle of $G$, we have $y_1\in T$ and $y_2\in S$. Now we observe that $x\in A\cap C, y_1\in B\cap T, y_2 \in S\cap D$. Let $X_1=(S\cap C)\cup (S\cap T)\cup (A\cap T), X_2=(A\cap T)\cup (S\cap T)\cup (S\cap D), X_3=(B\cap T)\cup (S\cap T)\cup (S\cap D), X_4=(S\cap C)\cup (S\cap T)\cup (B\cap T)$.
		
		{\bf Claim 1.} $\vert X_1\vert \geqslant 3$.
		
		For otherwise, since $d_G(x)\geqslant 5$, we have $A\cap C\neq \{x\}$. Note that $N_G((A\cap C)\setminus \{x\})=X_1\cup \{x\}$, which implies that $X_1\cup \{x\}$ is a $k$-cut of $G$ where $k\leqslant 3$, contradicting that $G$ is $4$-connected.
		
		{\bf Claim 2.} $A\cap T\neq \emptyset, S\cap C\neq \emptyset$.
		
		We show that $A\cap T\neq \emptyset$. For otherwise, by Claim $1$, we have $\vert (S\cap C)\cup (S\cap T)\vert \geqslant 3$, contradicting that $\vert S\vert=3$. By the similar arguments, we have $S\cap C\neq \emptyset$.
		
		{\bf Claim 3.} $S\cap T\neq \emptyset$.
		
		For otherwise, we assume that $S\cap T= \emptyset$. If $\vert S\cap C\vert =2$, then $\vert S\cap D\vert =1$ and $\vert X_2\vert, \vert X_3\vert \leqslant 3$, which implies that $A\cap D=B\cap D=\emptyset$, contradicting that $\vert D\vert \geqslant 2$. Hence we can assume that $\vert S\cap C\vert =1$. By Claim 1, we observe that $\vert A\cap T\vert =2$, then $\vert X_3\vert=3$ and $\vert X_4\vert=2$, which implies that $B\cap D=B\cap C=\emptyset$, contradicting that $\vert B\vert \geqslant 2$.
		
		Now we complete the proof of Lemma~\ref{lem4}. By Claims $2$ and $3$, we have that $\vert S\cap C\vert=\vert S\cap T\vert=\vert S\cap D\vert=\vert A\cap T\vert=\vert B\cap T\vert=1$. This implies that $A\cap D=B\cap D=\emptyset$, contradicting that $\vert D\vert \geqslant 2$.
	\end{proof}
	
	\begin{lem}\label{lem5}
		Let $G$ be a uniformly $4$-connected graph except for $C_5^2$ and $C_6^2$. Let $e=xy$ be a removable edge of $G$ with $d_G(x)=4$ and $d_G(y)\geqslant5$. Suppose $N_G(x)\setminus \{y\}=X=\{a,b,c\}$ such that $\vert E(G[X])\vert\leqslant1$. Let $H=G\ominus e$ such that $H$ is not uniformly $4$-connected with $\vert V(H)\vert \geqslant 7$. Then $X\subset V(H)$ and there exists an edge $e\in E(H[X])\setminus E(G[X])$ such that $e$ is a removable edge of $H$.
	\end{lem}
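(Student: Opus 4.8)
The plan is to pin down the structure of $H=G\ominus e$ explicitly and then invoke Lemma~\ref{lem4}. First I would observe that, since $d_G(y)\geqslant 5$, we have $d_{G-e}(y)=d_G(y)-1\geqslant 4$, so $y$ is not removed in forming $G\ominus e$; only $x$ is, because $d_{G-e}(x)=d_G(x)-1=3$. As $N_{G-e}(x)=X=\{a,b,c\}$, the operation deletes $x$ and inserts every missing edge among $a,b,c$, so in $H$ the set $X$ induces a triangle, $X\subset V(H)$ (none of $a,b,c$ equals $x$), and $E(H[X])\setminus E(G[X])$ is precisely the set of $3-\vert E(G[X])\vert\in\{2,3\}$ edges added to complete this triangle. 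Since $e$ is a removable edge of the $4$-connected graph $G$, the graph $H$ is $4$-connected by Definition~\ref{def4}, and $\vert V(H)\vert\geqslant 7$ by hypothesis, so Lemma~\ref{lem4} is available for the triangle $R$ on $\{a,b,c\}$.

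The heart of the argument is to exhibit a vertex $w\in\{a,b,c\}$ with $d_H(w)\geqslant 5$ whose two edges in $R$ both lie in $E(H[X])\setminus E(G[X])$; then Lemma~\ref{lem4} applied to $R$ at $w$ returns a removable edge of $H$ incident to $w$ inside $R$, which is automatically one of the newly added edges, giving the claim. To locate $w$ I would split on $\vert E(G[X])\vert$. If $\vert E(G[X])\vert=0$, then each of $a,b,c$ loses only the edge to $x$ but gains the two triangle edges, so $d_H(v)=d_G(v)+1\geqslant 5$ for every $v\in X$ (using $d_G(v)\geqslant 4$ since $G$ is $4$-connected), and all three triangle edges are new; any $w\in X$ works. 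If $\vert E(G[X])\vert=1$, say $ab\in E(G)$, then the new edges are exactly $ca$ and $cb$: the vertex $c$ loses $cx$ and gains both $ca,cb$, whence $d_H(c)=d_G(c)+1\geqslant 5$, and both edges of $R$ at $c$ are new, so $w=c$ works. (The vertices $a,b$ are deliberately avoided here, since they may satisfy $d_H=d_G=4$.)

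With such a $w$ in hand, Lemma~\ref{lem4} yields $v\in\{a,b,c\}\setminus\{w\}$ with $wv$ removable in $H$; by the choice of $w$, $wv\in E(H[X])\setminus E(G[X])$, which is the desired removable new edge. I expect the only real subtlety to be the case distinction that forces the choice of $w$: one must pick the triangle vertex that is not covered by the (at most one) pre-existing edge of $G[X]$, precisely so that the removable edge delivered by Lemma~\ref{lem4} is guaranteed to be a newly inserted edge rather than one inherited from $G$. The bound $d_G(y)\geqslant 5$ enters only to keep the $\ominus$-operation from deleting $y$, so that $H$ arises from $G$ by deleting the single vertex $x$ and completing its neighbourhood. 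I anticipate that the hypothesis that $H$ is not uniformly $4$-connected is contextual---it identifies the case in which this lemma is needed within the overall reduction---rather than an ingredient of the degree count; were it to be used, the natural role would be to secure the $4$-connectivity of $H$ or to exclude the small exceptional graphs $C_5^2, C_6^2$, both of which are already guaranteed by removability of $e$ and by $\vert V(H)\vert\geqslant 7$.
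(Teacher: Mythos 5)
Your proof is correct, but it reaches the key hypothesis of Lemma~\ref{lem4} by a genuinely different and more elementary route than the paper. Both arguments apply Lemma~\ref{lem4} to the triangle $H[X]$; the difference is how one produces a triangle vertex of degree at least $5$ in $H$ whose two triangle edges are both newly added. The paper uses the hypothesis that $H$ is not uniformly $4$-connected: it takes a $5$-connected pair $\{u,v\}$ of $H$ and shows, by lifting any five internally-disjoint $uv$-paths from $H$ back to $G$ (replacing the edge $ab$ by $axb$, or the subpath $abc$ by $axc$), that the vertex of $X$ not covered by $E(G[X])$ must itself lie in $\{u,v\}$, whence its degree in $H$ is at least $5$. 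You obtain the same degree bound by pure counting: that vertex loses its edge to $x$ but gains two triangle edges, so its degree in $H$ equals its degree in $G$ plus one, and $d_G(w)\geqslant 4$ since $G$ is $4$-connected. Your argument is shorter and proves slightly more: it uses neither that $G$ is uniformly $4$-connected nor that $H$ fails to be, which confirms your suspicion that the latter hypothesis is contextual rather than logically necessary for the stated conclusion. What the paper's longer argument buys is a byproduct that your proof does not deliver: the distinguished vertex ($b$ in the paper's labelling) belongs to a $5$-connected pair of $H$. That stronger fact is cited verbatim later, in Case 2, Subcase (b) of the proof of Theorem~\ref{thm9} (``By the proof of Lemma~\ref{lem5}, we know that the vertex $b$ must be in the $5$-connected vertex pair of $H_1$ and $d_{H_1}(b)\geqslant 5$''), so if your proof replaced the paper's, that step of the main theorem would need a separate justification---essentially the paper's path-replacement argument, which your degree count bypasses.
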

	
	\begin{proof}
		By Definition \ref{def4}, we have $X\subset V(H)$ obviously. It remains to show that there exists an edge $e$ of $H$ with the required properties. It follows that $H$ is $4$-connected and $H[X]\cong K_3$ by Definition \ref{def4}. Since $H$ is $4$-connected and not uniformly $4$-connected, it follows that there exists a pair of vertices which are $5$-connected in $H$, say $u$ and $v$. We can clearly observe that $u,v\in V(G)$ as well.
		
		At first we consider the case that $\vert E(G[X])\vert=1$, say $E(G[X])=\{ac\}$. Observe that $u$ and $v$ are $5$-connected in $H$, which implies that there exist at least five internally-disjoint paths between $u$ and $v$ in $H$. Now we show that $b\in \{u,v\}$. By contradiction, we assume that $b\notin \{u,v\}$. If neither $ab$ nor $bc$ lies on any of these paths, then $u$ and $v$ are $5$-connected in $G$ obviously, contradicting that $G$ is uniformly $4$-connected. If either $ab$ or $bc$ lies on one of these paths, say $P_1$, where $ab\in E(P_1)$ and $bc\notin E(P_1)$, then we can replace the edge $ab$ by $axb$, as the vertex $x$, the edges $ax$ and $bx$ do not exist in $H$. Thus we can define at least five internally-disjoint paths between $u$ and $v$ in $G$, a contradiction. Consequently, both $ab$ and $bc$ lie on one of these paths. Since $b\notin \{u,v\}$, it follows that both $ab$ and $bc$ lie on the same path, as these paths are internally-disjoint. Thus, we can replace the subpath $abc$ by $axc$, as the vertex $x$, the edges $ax$ and $bx$ do not exist in $H$. Hence we can define at least five internally-disjoint paths between $u$ and $v$ in $G$, a contradiction. Consequently, $b\in \{u,v\}$, which implies $d_H(b)\geqslant 5$. Since $\vert V(H)\vert \geqslant 7$ and $H[X]\cong K_3$, $ba$ or $bc$ is removable in $H$ by Lemma \ref{lem4}. Observe that $ba,bc\in E(H[X])\setminus E(G[X])$, thus there exists an edge $e\in E(H[X])\setminus E(G[X])$ such that $e$ is a removable edge of $H$.
		
		Next we consider the case that $E(G[X])=\emptyset$. By the arguments similar to those of the above case, we can obtain that $\{a,b,c\}\cap\{u,v\}\neq\emptyset$. Then there exists a vertex $w\in \{a,b,c\}$ such that $d_H(w)\geqslant 5$. Hence there exists an edge $e\in \{ab,ac,bc\}$ is removable in $H$. Observe that $\{ab,ac,bc\}=E(H[X])\setminus E(G[X])$, thus there exists an edge $e\in E(H[X])\setminus E(G[X])$ such that $e$ is a removable edge of $H$.
	\end{proof}
	
	The following lemma follows straightforwardly from Lemma~\ref{lem3}.
	\begin{lem}\label{lem6}
		Let $H$ be a $4$-connected graph. Let $G$ be constructed by applying $\Delta_1^+$ or $\Delta_2^+$-operation to a set $S$ of vertices and edges of $H$. Then $G$ is $4$-connected. 
	\end{lem}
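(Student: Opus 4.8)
The plan is to reduce both operations directly to Lemma~\ref{lem3} with $k=4$. The one gap to bridge is that Lemma~\ref{lem3} assumes the relevant induced subgraphs are complete ($H[X]\cong K_3$, and $H[Y]\cong K_3$), whereas Definition~\ref{def2} only requires $E(H[X])\neq\emptyset$ (and $E(H[Y])\neq\emptyset$). Before addressing this, I would first observe that the connectivity equalities built into Definition~\ref{def2} are automatic: deleting the added vertices recovers the modified host, that is, $\Delta_1^+(H)-x=H-E_X$ and $\Delta_2^+(H)-x-y=H-E_X-E_Y$, so the equalities $\kappa(H-E_X)=\kappa(\Delta_1^+(H)-x)$ and $\kappa(H-E_X-E_Y)=\kappa(\Delta_2^+(H)-x-y)$ hold by construction. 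Thus the substantive hypotheses are only $\kappa(H-E_X)\geqslant 3$ in the $\Delta_1^+$ case, and $\kappa(H-E_X-E_Y)\geqslant 2$ together with the end condition in the $\Delta_2^+$ case.

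For the $\Delta_1^+$-operation I would complete $X$ to the triangle $K_X$: set $F_X=E(K_X)\setminus E(H[X])$ and $H'=H+F_X$. Since adding edges cannot decrease connectivity, $H'$ is again $4$-connected, and now $H'[X]\cong K_3$. Taking $E_X'=E_X\cup F_X\subseteq E(H'[X])$, which is nonempty because $E_X\neq\emptyset$, the completion edges are exactly cancelled upon deletion, so $H'-E_X'=H-E_X$ and applying $\Delta_1^+$ to $H'$ with $E_X'$ produces precisely $G=\Delta_1^+(H)$. Since $\kappa(H'-E_X')=\kappa(H-E_X)\geqslant 3=k-1$, Lemma~\ref{lem3}(i) yields that $G$ is $4$-connected.

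The $\Delta_2^+$-operation is treated identically, completing both $X$ and $Y$: put $F_Y=E(K_Y)\setminus E(H[Y])$, $H''=H+F_X+F_Y$, $E_X'=E_X\cup F_X$, and $E_Y'=E_Y\cup F_Y$. Then $H''$ is $4$-connected with $H''[X]\cong K_3$ and $H''[Y]\cong K_3$, while $H''-E_X'-E_Y'=H-E_X-E_Y$, so the operation again reconstructs $G$. The remaining hypotheses of Lemma~\ref{lem3}(ii) transcribe directly from Definition~\ref{def2}: $\vert X\cap Y\vert\leqslant 2=k-2$, $\kappa(H-E_X-E_Y)\geqslant 2=k-2$, and, when this last holds with equality, every end of $H-E_X-E_Y$ meets both $X$ and $Y$. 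Hence Lemma~\ref{lem3}(ii) gives that $G$ is $4$-connected, completing the argument in both cases.

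The only step needing genuine care---and what I expect to be the main, if modest, obstacle---is verifying that the triangle completion is harmless, namely the identities $H'-E_X'=H-E_X$ and $H''-E_X'-E_Y'=H-E_X-E_Y$. In particular one must check that when $\vert X\cap Y\vert=2$ a common edge of $K_X$ and $K_Y$ is added once and then deleted, so that the bookkeeping of added and removed edges remains consistent; once this is settled the conclusion is an immediate application of Lemma~\ref{lem3}.
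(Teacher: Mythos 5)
Your proof is correct and follows the same route as the paper, which offers no written argument at all beyond the remark that Lemma~\ref{lem6} ``follows straightforwardly from Lemma~\ref{lem3}.'' Your triangle-completion step (passing to $H+F_X$, resp.\ $H+F_X+F_Y$, with the enlarged deletion sets $E_X\cup F_X$, $E_Y\cup F_Y$, including the check on a shared edge when $\vert X\cap Y\vert=2$) supplies exactly the bookkeeping needed to reconcile Definition~\ref{def2}, which only assumes $E(H[X])\neq\emptyset$, with the completeness hypothesis $H[X]\cong K_{k-1}$ of Lemma~\ref{lem3}, so your write-up is a careful, and in fact more rigorous, version of the reduction the paper leaves implicit.
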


	\section{The main result}
	\hspace*{1.3em} We will now demonstrate that quasi $4$-compatibility is both necessary and sufficient to guarantee that applying the $\Delta_1^+$ or $\Delta_2^+$-operation to a quasi $4$-compatible set in a uniformly $4$-connected graph results in a graph that remains uniformly $4$-connected.
	
	\begin{lem}\label{lem7}
		Let $H$ be a uniformly $4$-connected graph. Let $G$ be constructed by applying the $\Delta_1^+$-operation to a set $S$ of vertices and edges of $H$. Then $G$ is uniformly $4$-connected if and only if $S$ is a quasi $4$-compatible set in $H$. 
	\end{lem}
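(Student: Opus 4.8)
The plan is to translate both implications into a count of internally-disjoint paths and to match the ``fifth path'' through the new vertex $x$ with the quasi $3$-circuit chording paths of Definition~\ref{def1}. Throughout write $H'=H-E_X$, so that by Definition~\ref{def2} we have $G=H'+x+\{xa,xb,xc,xy\}$ with $d_G(x)=4$, and by Lemma~\ref{lem6} $G$ is $4$-connected; hence $\kappa_G(u,v)\geqslant 4$ for every pair, and $G$ is uniformly $4$-connected iff $\kappa_G(u,v)\leqslant 4$ for all $u,v$. For any pair involving $x$ this is automatic since $d_G(x)=4$. For $u,v\in V(H)$ the vertex $x$ is internal to at most one path of any internally-disjoint system, so $\kappa_G(u,v)\leqslant\kappa_{G-x}(u,v)+1=\kappa_{H'}(u,v)+1\leqslant\kappa_H(u,v)+1=5$. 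Thus uniform $4$-connectivity of $G$ fails precisely when some pair $u^{\ast},v^{\ast}\in V(H)$ has $\kappa_G(u^{\ast},v^{\ast})=5$; in that case exactly one of the five paths runs through $x$, entering and leaving by two distinct neighbours $p,q\in\{a,b,c,y\}$, while the other four lie in $H'$.

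For the implication ``$G$ uniformly $4$-connected $\Rightarrow S$ quasi $4$-compatible'' I would argue the contrapositive. If $S$ violates Type~$(1)$ of Definition~\ref{def3}, there is a quasi $3$-circuit chording path $p$ of $H'$ joining a listed pair $(u,v)$, i.e.\ $u\in X,\,v=y$, or $u,v\in V(e)$ with $e\in E(K_X)\setminus E_X$; in either case $x$ is adjacent in $G$ to both $u$ and $v$, so $u$-$x$-$v$ is a path of $G$. Let $p'$ be the witnessing subpath with ends $s,t$, and $Q_1,Q_2,Q_3$ the three internally-disjoint $s$-$t$ paths meeting $p$ only at $s,t$. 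Then $p',Q_1,Q_2,Q_3$, together with the $s$-$t$ path that follows $p$ from $s$ to $u$, crosses via $u$-$x$-$v$, and follows $p$ from $v$ to $t$, form five internally-disjoint $s$-$t$ paths in $G$, so $\kappa_G(s,t)\geqslant 5$. (A listed pair whose edge $uv$ is actually present in $H'$ admits no such path at all: the same construction with the edge $uv$ in place of $u$-$x$-$v$ would already give five disjoint $s$-$t$ paths inside $H'\subseteq H$, contradicting $\kappa_H(s,t)=4$; so only the ``virtual'' pairs play a role.)

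For the converse I would again argue contrapositively from a pair $u^{\ast},v^{\ast}$ with $\kappa_G(u^{\ast},v^{\ast})=5$, with four paths $P_1,P_2,P_3,P_4$ in $H'$ and a fifth path that runs along a segment $R_1$ from $u^{\ast}$ to a neighbour $p$ of $x$, then $p$-$x$-$q$, then a segment $R_2$ from the neighbour $q$ to $v^{\ast}$, where $R_1,R_2$ are internally disjoint from the $P_i$ and from each other. The decisive dichotomy is whether the virtual edge $pq$ is a genuine edge of $H$. If $p,q\in X$ with $pq\in E(H[X])\setminus E_X$, or $pq\in E_X$, or $\{p,q\}=\{w,y\}$ with $wy\in E(H)$, then $pq$ lives in $H$ (recall $E_X$ deletes only edges inside $X$), and replacing $p$-$x$-$q$ by the edge $pq$ yields five internally-disjoint $u^{\ast}$-$v^{\ast}$ paths in $H$, contradicting $\kappa_H(u^{\ast},v^{\ast})=4$. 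Hence $pq$ is not an edge of $H$, which forces $(p,q)$ to be a listed pair: either $p,q\in X$ with $pq\in E(K_X)\setminus E(H[X])\subseteq E(K_X)\setminus E_X$, or $\{p,q\}=\{w,y\}$ with $w\in X$. Now the $p$-$q$ path $\pi$ formed by $R_1$, $P_1$ and $R_2$ has $P_1$ (with ends $u^{\ast},v^{\ast}$) as a subpath chorded by the three internally-disjoint paths $P_2,P_3,P_4$, which meet $\pi$ only at $u^{\ast},v^{\ast}$; so $\pi$ is a quasi $3$-circuit chording path for a listed pair, and $S$ is not quasi $4$-compatible.

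The delicate part is the disjointness bookkeeping in the two constructions, and I expect the main obstacle to be the case analysis identifying which neighbour pairs $(p,q)$ of $x$ can actually carry the fifth path: one must eliminate, via the $4$-connectivity of $H$, every pair for which $pq$ is already an edge of $H$, and then treat the degenerate configurations in which a stub $R_1$ or $R_2$ is trivial, or in which $p$ or $q$ coincides with one of $u^{\ast},v^{\ast},s,t$, so that the extracted path $\pi$ still meets the exact requirements of Definition~\ref{def1}.
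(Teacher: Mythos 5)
Your overall framework matches the paper's: both proofs hinge on the fact that a failure of uniform $4$-connectivity in $G$ is witnessed by a pair $u^{\ast},v^{\ast}$ with exactly five internally-disjoint paths, exactly one of which passes through $x$, and both then trade the detour through $x$ against either a genuine edge of $H$ (contradiction with $\kappa_H=4$) or a quasi $3$-circuit chording path (violation of Definition~\ref{def3}). Your forward direction is correct and is essentially the paper's argument with the disjointness bookkeeping written out.

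There is, however, a genuine gap in your converse, and it is located exactly where you placed your parenthetical remark. You claim that whenever the pair $p,q$ of neighbours of $x$ on the fifth path spans an edge $pq\in E(H)$, replacing $p$-$x$-$q$ by $pq$ produces five internally-disjoint $u^{\ast}$-$v^{\ast}$ paths in $H$, hence a contradiction; and symmetrically, in the forward direction, that a listed pair whose edge lies in $H-E_X$ ``admits no such path at all.'' Both claims fail in the degenerate case $\{p,q\}=\{u^{\ast},v^{\ast}\}$ with $pq\in E(H-E_X)$: there the edge $pq$ may itself already be one of the four paths $P_1,\dots,P_4$, and your replacement merely duplicates that path, yielding only four distinct internally-disjoint paths in $H$ --- no contradiction. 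This situation really occurs (take $H[X]\cong K_3$, $E_X=\{ac\}$, $\{u^{\ast},v^{\ast}\}=\{a,b\}$: any system of five $a$-$b$ paths in $G$ must contain the edge $ab$, and may contain $a$-$x$-$b$ as well), so it cannot be dismissed as a configuration to be eliminated; your proposed remedy, ``eliminate, via the $4$-connectivity of $H$, every pair for which $pq$ is already an edge of $H$,'' is precisely the step that cannot be carried out. The correct conclusion in this branch --- and it is Case~2 of the paper's proof --- is that $S$ is still not quasi $4$-compatible, but for a different reason: the edge $pq$, viewed as a path of $H-E_X$, is itself a quasi $3$-circuit chording path (take $p'=p=pq$, chorded by three of the $P_i$, which meet it only at $p,q$). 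This is exactly why Definition~\ref{def3} lists pairs $u,v\in V(e)$ for \emph{all} $e\in E(K_X)\setminus E_X$, not merely the ``virtual'' pairs; your reading of the definition as constraining only non-edges of $H$ is what produced the hole. Repairing the proof only requires replacing the false contradiction in that branch by this edge-as-chording-path observation, after which your argument goes through and is, if anything, stated in greater generality than the paper's (which treats only the representative case $H[X]\cong K_3$, $E_X=\{ac\}$).
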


	\begin{proof}
		First, we suppose that $G$ is a uniformly $4$-connected graph, and that $S$ is not a quasi $4$-compatible set in $H$. We will give a detailed proof only for $H[X]\cong K_3$ and $E_X=\{ac\}$. We observe that $S=\{a,b,c,y,ac\}$(see Fig. \ref{Fig1}) by Definition \ref{def3}. Since $S$ is not quasi $4$-compatible in $H$, there must exist a $uv$-path where $u\in X, v= y$ or $u=a, v= b$ or $u=b, v=c$ which is a quasi $3$-circuit chording path in $H-ac$. However, in $G$, the endpoints of the subpath of that quasi $3$-circuit chording path are $5$-connected, which contradicts that $G$ is uniformly $4$-connected. Therefore, $S$ must be a quasi $4$-compatible set in $H$.
		
		\begin{figure}[H]
			\begin{center}
				\input{1.TpX}
			\end{center}
		\end{figure} 
		
		\begin{figure}[H]
			\begin{center}
				\input{2.TpX}
			\end{center}
		\end{figure}
		
		Conversely, suppose that $S$ is a quasi $4$-compatible set in $H$, and that $G$ is not uniformly $4$-connected. We also only discuss the case that $H[X]\cong K_3$ and $E_X=\{ac\}$. We observe that $S=\{a,b,c,y,ac\}$ by Definition \ref{def4}. By Lemma~\ref{lem6}, $G$ is $4$-connected. By assumption, $G$ is not uniformly $4$-connected. Then $G$ contains a pair of vertices $u$ and $v$ with connectivity more than $4$. Since $d_G(x)=4$, $x\notin \{u,v\}$. Since $H$ is uniformly $4$-connected, $u$ and $v$ are not $5$-connected in $H$, it must be true that for any set of five internally-disjoint paths connecting $u$ and $v$ in $G$, one path must contain the vertex $x$, say $P_1$(see Fig. \ref{Fig2}). Now we will consider $\{u,v\}$ and $N_{P_1}(x)$, where $N_{P_1}(x)$ denotes $N_{G}(x)$ in $P_1$.

		\textbf{Case 1.} $\{u,v\}\neq \{a,b\}, \{b,c\}$.
		Obviously, $N_{P_1}(x)\neq \{a,b\}, \{b,c\}$, for otherwise, we can replace $axb(bxc)$-subpath of $P_1$ with the edge $ab(bc)$ in $G$, respectively, contradicting that $P_1$ contains $x$. If $N_{P_1}(x)=\{a,c\}$, then we can replace $axc$-subpath of $P_1$ with the edge $ac$ in $H$, which contradicts that $H$ is uniformly $4$-connected. If $y\in N_{P_1}(x)$, then it must be seen that the $ay$-path, the $by$-path or the $cy$-path is a quasi $3$-circuit chording path in $H-ac$, which contradicts the quasi $4$-compatibility of $S$ in $H$.
		
		\textbf{Case 2.} $\{u,v\}=\{a,b\}$ or $\{u,v\}=\{b,c\}$. Without loss of generality, we can assume that $\{u,v\}=\{a,b\}$. Obviously, for any set of five internally-disjoint paths connecting $a$ and $b$ in $G$, one path must be the edge $ab$. For otherwise, this contradicts that $H$ is uniformly $4$-connected. Analogously, we claim that $N_{P_1}(x)\neq \{b,c\}$. If $N_{P_1}(x)=\{a,c\}$ or $y\in N_{P_1}(x)$, the proof is similar to the Case $1$. If $N_{P_1}(x)=\{a,b\}$, then the edge $ab$ will be a quasi $3$-circuit chording path in $H-ac$, a contradiction.
		
		Therefore, $G$ is uniformly $4$-connected if $S$ is a quasi $4$-compatible set in $H$.
	\end{proof}
	
	\begin{lem}\label{lem8}
		Let $H$ be a uniformly $4$-connected graph. Let $G$ be constructed by applying the $\Delta_2^+$-operation to a set $S$ of vertices and edges of $H$. Then $G$ is uniformly $4$-connected if and only if $S$ is a quasi $4$-compatible set in $H$. 
	\end{lem}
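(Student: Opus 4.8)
The plan is to follow the template of Lemma~\ref{lem7}, proving both implications by contradiction, but now tracking the two new degree-$4$ vertices $x,y$ (joined by the edge $xy$) and the richer Type~$(2)$ compatibility of Definition~\ref{def3}. Throughout I would use that $G$ is $4$-connected by Lemma~\ref{lem6} (here the ``ends'' hypothesis of Definition~\ref{def2}, invoked when $\kappa(H-E_X-E_Y)=2$, is exactly what guarantees this), together with two standing reductions: since $d_G(x)=d_G(y)=4$, any pair $u,v$ with $\kappa_G(u,v)\ge 5$ lies in $V(H)$; and since $H$ is uniformly $4$-connected, such $u,v$ are not $5$-connected in $H\supseteq H-E_X-E_Y$, so in any family of five internally-disjoint $uv$-paths in $G$ at least one path meets $\{x,y\}$. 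As at most one path can use $x$ and at most one can use $y$, this pins the structure down to a short list of configurations.

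For the forward (necessity) direction I would argue the contrapositive: if $S$ violates Type~$(2)$, produce a pair that is $5$-connected in $G$. Each clause supplies its own fifth path through the gadget, added to the four internally-disjoint paths furnished by the offending structure between the endpoints $s,t$ of the relevant subpath. Concretely, a quasi $3$-circuit chording path with endpoints $u\in X-Y$, $v\in Y-X$ gains the path $s\cdots u-x-y-v\cdots t$ (using $xy$); a quasi $3$-circuit chording path whose endpoints are $V(e)$ for a triangle edge $e$ of $K_X$ (resp.\ $K_Y$) gains a path through $x$ (resp.\ $y$); when $|X\cap Y|=2$, a quasi chord of a cycle joining $u,v\in X\cap Y$ gains \emph{two} extra paths $u-x-v$ and $u-y-v$, which together with the two arcs of the cycle and the chord give five; and an $e^{+}$-quasi $3$-circuit chording path is handled by realizing its added $Y$-edge $e$ as $d-y-f$ through $y$ while the $X$-edge $e_1$ at its endpoints is realized as $u-x-v$ through $x$. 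In every case the three ``extra'' paths meet the main path only at $s,t$ and the gadget vertices are new, so the five paths are internally disjoint, contradicting uniform $4$-connectivity.

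For the converse (sufficiency) I would assume $S$ is quasi $4$-compatible but $G$ is not uniformly $4$-connected, fix a $5$-connected pair $u,v\in V(H)$ and five internally-disjoint $uv$-paths, and run the case analysis on how $x,y$ are used: only $x$ (symmetrically only $y$); both on one path; or on two distinct paths. In every configuration the gadget appears as a detour $p-x-q$ (or $q'-y-q''$, or $p-x-y-q$) with $p,q$ in the triangle(s). If the corresponding triangle edge $pq$ is present in $H-E_X-E_Y$ I reroute through it to get five gadget-free paths, contradicting that one path must meet $\{x,y\}$; if $pq$ is a \emph{removed} edge of $H$ I reroute through it inside $H$ to get five internally-disjoint $uv$-paths in $H$, contradicting that $H$ is uniformly $4$-connected. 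The remaining case is that $pq$ is a \emph{virtual} edge of the triangle, and here I build a forbidden path: splicing the two non-gadget pieces of the detour path to a second $uv$-path $P_i$ yields a $pq$-path $\pi$ whose middle subpath is some $P_j$, and the other gadget-free paths $P_k,P_\ell,\dots$ give three further internally-disjoint $uv$-paths meeting $\pi$ only at $u,v$; thus $\pi$ is a quasi $3$-circuit chording path of $H-E_X-E_Y$ joining $V(pq)$, contradicting clause~(i). When the two gadget vertices sit on distinct paths with both triangle edges virtual, one of them supplies the $pq$-backbone while the other's virtual edge is exactly the edge whose addition turns $\pi$ into a quasi $3$-circuit chording path, so either $\pi$ already is one (contradicting~(i)) or $\pi$ is an $e^{+}$-quasi $3$-circuit chording path (contradicting~(ii)); vertices of $X\cap Y$, being adjacent to both $x$ and $y$, feed the quasi-chord clause instead.

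The main obstacle I anticipate is precisely the converse cases in which \emph{both} $x$ and $y$ are used, since then only three gadget-free $uv$-paths survive, and the three extra paths required by Definition~\ref{def1} must be assembled partly from leftover pieces of the detour paths and from the virtual edge realized through the second gadget vertex. Verifying that these three paths are genuinely internally disjoint and touch the constructed path $\pi$ only at the endpoints of its chosen subpath is the delicate bookkeeping step, and it is exactly what forces clause~(ii) (the $e^{+}$-clause) and, when $|X\cap Y|=2$, the quasi-chord clause. A secondary complication is the careful treatment of vertices in $X\cap Y$, which lie on both gadgets, together with the preliminary check that the present/removed/virtual trichotomy for $pq$ genuinely exhausts the possibilities and reduces the ``two distinct paths'' case to the single-vertex cases whenever at least one triangle edge is non-virtual.
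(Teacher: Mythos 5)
Your overall strategy is the same as the paper's: necessity by converting each violation of Definition~\ref{def3} into five internally-disjoint paths in $G$ (your constructions, including the path $s\cdots u\hbox{-}x\hbox{-}y\hbox{-}v\cdots t$ for cross pairs and the two paths $u\hbox{-}x\hbox{-}v$, $u\hbox{-}y\hbox{-}v$ in the quasi-chord case, are exactly right), and sufficiency by classifying how $x,y$ sit on a family of internally-disjoint $uv$-paths and then either rerouting through present/removed triangle edges or splicing the detour pieces with a gadget-free path to manufacture a forbidden (possibly $e^{+}$-)quasi $3$-circuit chording path. The paper organizes the converse by $\kappa_G(u,v)\in\{5,6\}$ rather than by your trichotomy, but the mechanics coincide.

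There is, however, a genuine gap in the first branch of your present/removed/virtual trichotomy. You claim that when the triangle edge $pq$ underlying a detour $p\hbox{-}x\hbox{-}q$ is present in $H-E_X-E_Y$ you can always reroute through it; this fails exactly when $\{p,q\}=\{u,v\}$ and $\kappa_G(u,v)=5$. In that situation every family of five internally-disjoint $uv$-paths must already contain the edge $pq$ as one of its members: the edge has no internal vertices, so if it were absent it could be added as a sixth internally-disjoint path, contradicting $\kappa_G(u,v)=5$. Rerouting the detour through $pq$ then merely duplicates an existing path and yields no contradiction. This is not a fringe case but precisely Subcases (b) and (c) of Case 2 in the paper's proof: with $E_X=\{bc\}$, $E_Y=\{df\}$ (so $ac$, $de$, $ef$ present), it is the situation $\{u,v\}\in\{\{a,c\},\{d,e\},\{e,f\}\}$ with $N_{P_1}(x)=\{a,c\}$, etc. The correct resolution --- and the reason clause (i) of Definition~\ref{def3} quantifies over \emph{all} $e\in E(K_X)\setminus E_X$, present edges included, not only virtual ones --- is that the edge $ac$, being itself one of the five paths, is a one-edge $uv$-path that is a quasi $3$-circuit chording path of $H-E_X-E_Y$ (witnessed by the three gadget-free paths in the family), or a $df^{+}$-quasi $3$-circuit chording path when $y$ also appears on one of the remaining paths. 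Since your proposal invokes clause (i) only for virtual edges, as written it derives no contradiction in these subcases; once they are added (and the rerouting claim restricted to $\{p,q\}\neq\{u,v\}$), your argument closes along the paper's lines.
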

	
	\begin{proof}
		First, we suppose that $G$ is uniformly $4$-connected, and that $S$ is not a quasi $4$-compatible set in $H$. We will give a detailed proof only for $E(H[X])=\{ac,bc\}, H[Y]\cong K_3, X\cap Y=\emptyset, E_X=\{bc\}$ and $E_Y=\{df\}$. We observe that $S=\{a,b,c,d,e,f,bc,df\}$ by Definition \ref{def3}. Since $S$ is not quasi $4$-compatible in $H$, there exists a $uv$-path where $u\in X, v\in Y$ or $u=a, v\in \{b,c\}$ or $u=e, v\in \{d,f\}$ which is a quasi $3$-circuit chording path in $H-bc-df$, or there exists an $ab$-path or $ac$-path which is a $df^+$-quasi $3$-circuit chording path in $H-bc-df$, or there exists a $de$-path or $ef$-path which is an $ab^+$ or $bc^+$-quasi $3$-circuit chording path in $H-bc-df$. Observe that in $G$, the endpoints of the subpath of that quasi $3$-circuit chording path or $df^+$, $ab^+$ or $bc^+$-quasi $3$-circuit chording path are $5$-connected, which contradicts that $G$ is uniformly $4$-connected. Therefore, $S$ must be a quasi $4$-compatible set in $H$. 
		
		Conversely, suppose that $S$ is a quasi $4$-compatible set in $H$, and that $G$ is not uniformly $4$-connected. We also only discuss the case that $E(H[X])=\{ac,bc\}, H[Y]\cong K_3, X\cap Y=\emptyset, E_X=\{bc\}$ and $E_Y=\{df\}$. We observe that $S=\{a,b,c,d,e,f,bc,df\}$ by Definition \ref{def3}. By the similar arguments, $G$ contains a pair of vertices $u$ and $v$ with connectivity more than $4$, and $x,y\notin \{u,v\}$. Since $H$ is uniformly $4$-connected, $u$ and $v$ are not $5$-connected in $H$. Let $\kappa_G(u,v)$ denote the connectivity of $u,v$ in $G$ and let $\kappa_H(u,v)$ denote the connectivity of $u,v$ in $H$. The following several cases arise, depending on $\kappa_G(u,v)$. If $\kappa_G(u,v)\geqslant 7$, then $\kappa_H(u,v)\geqslant 5$, which contradicts that $H$ is uniformly $4$-connected. Hence this case can never arise.
		
		\textbf{Case 1.} $\kappa_G(u,v)=6$.
		We claim that for any set of six internally-disjoint paths connecting $u$ and $v$ in $G$, one path must contain the vertex $x$ and another path must contain the vertex $y$, say $P_1, P_2$, respectively, where $P_1\neq P_2$. Two subcases arise, depending on $\{u,v\}, N_{P_1}(x)$ and $N_{P_2}(y)$, where $N_{P_1}(x)$ denotes $N_{G}(x)$ in $P_1$ and $N_{P_2}(y)$ denotes $N_{G}(y)$ in $P_2$.
		
		Subcase (a). $\{u,v\}\neq\{a,c\}, \{d,e\}, \{e,f\}$.
		Obviously, $N_{P_1}(x)\neq \{a,c\}, N_{P_2}(y)\neq \{d,e\},\{e,f\}$. If $N_{P_1}(x)=\{a,b\}$ or $\{b,c\}$, $N_{P_2}(y)=\{d,f\}$, then we can replace $dyf$-subpath of $P_2$ with the edge $df$ in $H$. Thus, $u$ and $v$ are $5$-connected in $H$, which contradicts that $H$ is uniformly $4$-connected.
		
		Subcase (b). $\{u,v\}=\{a,c\}$. Obviously, for any set of six internally-disjoint paths connecting $a$ and $c$ in $G$, one path must be the edge $ac$. For otherwise, this contradicts that $\kappa_G(u,v)=6$. Then it is clear that the edge $ac$ will be a quasi $3$-circuit chording path in $H-bc-df$, which contradicts the quasi $4$-compatibility of $S$ in $H$.  
		
		Subcase (c). $\{u,v\}=\{d,e\}$ or $\{e,f\}$.
		The proof of this subcase is similar to Subcase (b).
		
		\textbf{Case 2.} $\kappa_G(u,v)=5$.
		We claim that for any set of five internally-disjoint paths connecting $u$ and $v$ in $G$, there exists a path which must contain the vertices $x$ or $y$. Similarly, several subcases arise.
		
		Subcase (a). $\{u,v\}\neq\{a,c\},\{d,e\},\{e,f\}$.
		If only the vertex $x$ is on one of five internally-disjoint paths, say $P_1$, while $y$ is not. Then $N_{P_1}(x)\neq \{a,c\}$, clearly. If $N_{P_1}(x)=\{a,b\}$, then it is clear that there exists an $ab$-path which is a quasi $3$-circuit chording path in $H-bc-df$, which contradicts the quasi $4$-compatibility of $S$ in $H$. If $N_{P_1}(x)=\{b,c\}$, then we can replace $bxc$-subpath of $P_1$ with the edge $bc$ in $H$, which implies that $u$ and $v$ are $5$-connected in $H$, a contradiction. The proof is similar for that only the vertex $y$ is on one of five internally-disjoint paths, while $x$ is not. Now we will consider that the vertices $x$ and $y$ are both on one of five internally-disjoint paths. If the vertices $x$ and $y$ are on the same path, then there exists a $uv$-path where $u\in X, v\in Y$ such that the path is a quasi $3$-circuit chording path in $H-bc-df$, a contradiction. If the vertices $x$ and $y$ are on different paths, say $P_1$, $P_2$, respectively. Obviously, $N_{P_1}(x)\neq \{a,c\}$ and $N_{P_2}(y)\neq \{d,e\},\{e,f\}$. If $N_{P_1}(x)=\{a,b\}$, $N_{P_2}(y)=\{d,f\}$, then it must be seen that there exists an $ab$-path which is a $df^+$-quasi $3$-circuit chording path in $H-bc-df$, a contradiction. If $N_{P_1}(x)=\{b,c\}$, $N_{P_2}(y)=\{d,f\}$, then we can replace $bxc$-subpath of $P_1$ and $dyf$-subpath of $P_2$ with the edges $bc$ and $df$ in $H$, respectively, which implies that $u$ and $v$ are $5$-connected in $H$, a contradiction.
		
		Subcase (b). $\{u,v\}=\{a,c\}$.
		Obviously, for any set of five internally-disjoint paths connecting $a$ and $c$ in $G$, one path must be the edge $ac$. For otherwise, this contradicts that $\kappa_G(u,v)=5$. We first consider the case that only the vertex $x$ is on one of five internally-disjoint paths, say $P_1$, while $y$ is not. If $N_{P_1}(x)=\{a,b\}$ or $\{b,c\}$, the proof is similar to Subcase (a). If $N_{P_1}(x)=\{a,c\}$, then it is clear that the edge $ac$ is a quasi $3$-circuit chording path in $H-bc-df$, which contradicts the quasi $4$-compatibility of $S$ in $H$. The proof is similar for that only the vertex $y$ is on one of five internally-disjoint paths, while $x$ is not. Now we will consider that the vertices $x$ and $y$ are both on one of five internally-disjoint paths. If the vertices $x$ and $y$ are on the same path, the proof is similar to Subcase (a). If the vertices $x$ and $y$ are on different paths, say $P_1$, $P_2$, respectively. Analogously, $N_{P_2}(y)\neq \{d,e\},\{e,f\}$. If $N_{P_1}(x)=\{a,b\}$ or $\{b,c\}$, $N_{P_2}(y)=\{d,f\}$, the proof is similar to Subcase (a). If $N_{P_1}(x)=\{a,c\}, N_{P_2}(y)=\{d,f\}$, then it must be seen that the edge $ac$ will be a $df^+$-quasi $3$-circuit chording path in $H-bc-df$, a contradiction.
		
		Subcase (c). $\{u,v\}=\{d,e\}$ or $\{e,f\}$.
		The proof of this subcase is similar to Subcase (b).
		
		Therefore, $G$ is uniformly $4$-connected if $S$ is a quasi $4$-compatible set in $H$.
	\end{proof}
	
	\begin{remark}
		For Lemmas \ref{lem7} and \ref{lem8}, although we have not provided individual proofs for all the cases presented here, they possess common characteristics and the proofs are similar. In particular, for the case of $\vert X\cap Y\vert =2$ in lemma \ref{lem8}, we add a condition to the definition of the quasi $4$-compatible sets in Type (2) of Definition \ref{def3}, namely that no $uv$-path is a quasi chord of any cycle of $H-E_X-E_Y$ for $u,v\in X\cap Y$ where $u\neq v$.
	\end{remark}
	
	We are now prepared to establish that any uniformly $4$-connected graph, except for $C_5^2$ and $C_6^2$, can be constructed by applying $\Delta_1^+$ or $\Delta_2^+$-operation to a smaller uniformly $4$-connected graph.
	
	\begin{thm}\label{thm9}
		Let $G$ be a graph. Then $G$ is uniformly $4$-connected if and only if one of the following holds:
		
		(i) $G\cong C_5^2$ or $C_6^2$,
		
		(ii) there exists a uniformly $4$-connected graph $G^\prime$, $\vert G^\prime\vert < \vert G\vert$, such that $G$ can be constructed by applying $\Delta_1^+$ or $\Delta_2^+$-operation to a quasi $4$-compatible set in $G^\prime$.  
	\end{thm}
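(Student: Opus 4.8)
The statement is an equivalence, and my plan is to treat the two directions separately: the reverse implication is immediate from the machinery already assembled, while the forward implication carries all of the work. For \emph{sufficiency}, suppose (i) or (ii) holds. If $G\cong C_5^2$ or $C_6^2$, then $G$ is uniformly $4$-connected by direct inspection, both graphs being $4$-regular and vertex-transitive so that every local connectivity equals $4$. If instead (ii) holds, then $G$ arises from a uniformly $4$-connected graph $G'$ by a $\Delta_1^+$- or $\Delta_2^+$-operation applied to a quasi $4$-compatible set; Lemma~\ref{lem6} gives that $G$ is $4$-connected, and the sufficiency directions of Lemmas~\ref{lem7} and~\ref{lem8} give that $G$ is in fact uniformly $4$-connected. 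So this direction needs nothing new.

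For \emph{necessity}, assume $G$ is uniformly $4$-connected with $G\not\cong C_5^2,C_6^2$; I must produce a uniformly $4$-connected $G'$ with $|V(G')|<|V(G)|$ realizing (ii). Since $G$ is $4$-connected but neither base graph, Lemma~\ref{lem2} furnishes a removable edge $e=xy$ (and one checks $|V(G)|\geq 7$, the only smaller $4$-connected graphs being $C_5^2$ and $C_6^2$). The first key observation is that $e$ must have an endpoint of degree $4$: if both $d_G(x),d_G(y)\geq 5$, then $G\ominus e=G-e$, which Definition~\ref{def4} requires to be $4$-connected; but for adjacent $x,y$ uniform $4$-connectivity forces $\kappa_{G-e}(x,y)=\kappa_G(x,y)-1=3$, contradicting $4$-connectivity of $G-e$. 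Hence, say, $d_G(x)=4$, and the reduction $H=G\ominus e$ suppresses $x$ (and $y$ as well when $d_G(y)=4$), so that $H$ is $4$-connected by Definition~\ref{def4} with $|V(H)|<|V(G)|$. Writing $X=N_G(x)\setminus\{y\}$ and completing $H[X]$ to the triangle $K_X$, the map $G\mapsto H$ is precisely the inverse of a $\Delta_1^+$-operation when $d_G(y)\geq 5$ and of a $\Delta_2^+$-operation when $d_G(y)=4$.

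If the chosen $H$ is uniformly $4$-connected I am finished: then $G=\Delta_1^+(H)$ (resp.\ $\Delta_2^+(H)$) on the associated vertex--edge set $S$, and the necessity directions of Lemmas~\ref{lem7} and~\ref{lem8} force $S$ to be quasi $4$-compatible, so $G'=H$ witnesses (ii). The crux is therefore the case in which $H$ is \emph{not} uniformly $4$-connected, i.e.\ completing $X$ to a triangle creates a $5$-connected pair. Here Lemma~\ref{lem5} is the decisive tool: under its hypotheses it exhibits a removable edge of $H$ among the freshly added triangle edges $E(H[X])\setminus E(G[X])$, while Lemma~\ref{lem4} supplies removable edges in any triangle through a vertex of degree $\geq 5$. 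The plan is to use these to \emph{re-select} the edge suppressed at $x$ --- equivalently, to choose which of the four neighbours of $x$ plays the special role $y$ and which three form the triangle $X$ --- and to show that some such choice yields a reduction that is genuinely uniformly $4$-connected. The same re-selection disposes of the degenerate configuration in which $X$ is already a triangle in $G$ (where $E_X=\emptyset$ would otherwise be forced) by promoting a triangle vertex into the special role, and the remaining subcases $d_G(y)=4$ and $|E(G[X])|\in\{2,3\}$ run parallel to the case above, carried through the $\Delta_2^+$ analysis of Lemma~\ref{lem8}.

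The main obstacle is precisely this last step: certifying that the re-selection can be made so that \emph{no} $5$-connected pair survives in the reduced graph, rather than merely trading one offending pair for another and looping indefinitely. This is where the exact content of Lemmas~\ref{lem4} and~\ref{lem5} must be pushed, to guarantee the existence of a removable edge with a degree-$4$ endpoint whose suppression simultaneously lowers the order and restores uniform $4$-connectivity. Once a single such reduction is secured, $G'=H$ completes the proof of the theorem; iterating the reduction and terminating at $C_5^2$ or $C_6^2$ then recovers the global construction announced in the abstract.
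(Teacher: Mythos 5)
Your sufficiency direction and the easy parts of necessity (existence of a removable edge via Lemma~\ref{lem2}, impossibility of both endpoints of $e$ having degree at least $5$, and the case where $H=G\ominus e$ happens to be uniformly $4$-connected) all match the paper. But the heart of the theorem is exactly the case you leave open: what to do when $H$ is \emph{not} uniformly $4$-connected. Your proposed mechanism --- re-selecting which of the four neighbours of $x$ plays the special role $y$, and claiming that some such choice yields a uniformly $4$-connected reduction in one step --- is never proved, and you concede as much (``the main obstacle is precisely this last step''). Nothing in Lemmas~\ref{lem4} or~\ref{lem5} supports it: Lemma~\ref{lem5} produces a removable edge of $H$ lying inside the newly created triangle, i.e.\ an edge $ab\in E(H[X])\setminus E(G[X])$ of the \emph{reduced} graph, not a different removable edge at $x$ in $G$, and no statement in the paper guarantees that any single reduction at $x$ restores uniform $4$-connectivity.

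What the paper actually does in this case is an iterated descent that never returns to $x$. With $H_1=G\ominus e$ non-uniform, Lemma~\ref{lem5} gives a removable edge $ab\in E(H_1[X])\setminus E(G[X])$; set $H_2=H_1\ominus ab$. The paper first establishes structural facts: $b$ lies in every $5$-connected pair of $H_1$, $d_{H_1}(b)\geqslant 5$, and every system of five internally-disjoint paths from $b$ must use both $ba$ and $bc$. When $d_{H_1}(a)\geqslant 5$, these facts force $H_2=H_1-ab$ to be uniformly $4$-connected, and $G$ is then one $\Delta_1^+$-operation away from $H_2$. When $d_{H_1}(a)=4$, the argument repeats on $H_2$, and the real difficulty becomes termination: the paper analyzes the possibility that the chain $H_1,H_2,\dots,H_n$ of non-uniform $4$-connected graphs descends to order at most $6$, checking that $H_n\cong K_5\cong C_5^2$ is acceptable, that $Oct^+$ and $K_6\setminus e$ reduce further to $C_5^2$, and that $H_n\cong K_6$ is impossible. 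None of this --- the descent, the facts about $b$, or the small-order termination analysis --- appears in your proposal, so the necessity direction is not established. A secondary error: you treat $G[X]\cong K_3$ as a degenerate configuration to be repaired by re-selection, whereas the paper shows it simply cannot occur: if $G[X]\cong K_3$ then $H_1=G-x$ is $4$-connected, and appending the path $axb$ to four internally-disjoint $ab$-paths of $H_1$ makes $a$ and $b$ five-connected in $G$, contradicting uniform $4$-connectivity.
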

	
	\begin{proof}
		Obviously, $G$ is uniformly $4$-connected if $G\cong C_5^2$ or $C_6^2$, and by Lemma~\ref{lem7} and Lemma~\ref{lem8}, $G$ is uniformly $4$-connected if $G$ satisfies the condition (ii).
		
		Conversely, let $G$ be a uniformly $4$-connected graph except for $C_5^2$ and $C_6^2$. We will show that there exists an appropriate uniformly $4$-connected graph $G^\prime$. Since $G\not\cong C_5^2, C_6^2$, by Lemma~\ref{lem2}, $G$ contains a removable edge $e$, and let $e=xy$ and $H_1=G\ominus e$. The following three cases arise, depending on the degree of $x$ and $y$ in $G$.
		
		\textbf{Case 1.} $d_G(x)\geqslant 5, d_G(y)\geqslant 5$.
		By Lemma~\ref{lem2}, $H_1=G-xy$ is $4$-connected. Obviously there exist five internally-disjoint paths between $x,y\in V(G)$, which contradicts that $G$ is uniformly $4$-connected. Hence this case can never arise.   
		
		\textbf{Case 2.} $d_G(x)=4, d_G(y)\geqslant 5$ or $d_G(x)\geqslant 5, d_G(y)=4$.
		Without loss of generality, we can assume that $d_G(x)=4, d_G(y)\geqslant 5$. Let $N_G(x)\setminus \{y\}=X=\{a,b,c\}$. If $G[X]\cong K_3$, then $H_1=G-x$. Since $H_1$ is $4$-connected, there exist at least four internally-disjoint paths between two vertices $a,b\in V(H_1)$. Observe that the vertex $x$, the edges $ax$ and $bx$ do not exist in $H_1$, consequently, we can define at least five internally-disjoint paths between $a$ and $b$ in $G$ by adding $axb$ to these paths, which contradicts that $G$ is uniformly $4$-connected. Thus, $G[X]\not\cong K_3$. 
		
		If $\vert E(G[X])\vert=2$, say $E(G[X])=\{ab,bc\}$. To see that $H_1=G^\prime$, that is, $H_1$ is also uniformly $4$-connected, suppose for the sake of contradiction that there exist five internally-disjoint paths between two vertices $u,v\in V(H_1)$. If $ac$ does not lie on any of these paths, obviously, $u,v$ will be $5$-connected in $G$, a contradiction. If $ac$ lies on one of these paths, we may safely replace the edge $ac$ by $axc$, which implies that $u$ and $v$ are $5$-connected in $G$, a contradiction. Consequently, $H_1$ is uniformly $4$-connected. Since we can construct $G$ by applying $\Delta_1^+$-operation to $S=\{a,b,c,y,ac\}$ in $H_1$, we can conclude by Lemma~\ref{lem7} that $S$ is a quasi $4$-compatible set in $H_1$. If $\vert E(G[X])\vert=1$, say $E(G[X])=\{ac\}$. Two subcases must be considered.
		
		Subcase (a). $H_1$ is uniformly $4$-connected.
		Then $G$ can be constructed from the uniformly $4$-connected graph $H_1$ by applying $\Delta_1^+$-operation to $S=\{a,b,c,y,ab,bc\}$ in $H_1$, which is therefore a quasi $4$-compatible set in $H_1$ by Lemma~\ref{lem7}.
		
		Subcase (b). $H_1$ is not uniformly $4$-connected. We first assume that $\vert V(H_1)\vert\geqslant7$. Then by Lemma \ref{lem5}, there exists a removable edge $e\in \{ab,bc\}$ in $H_1$. Without loss of generality, we can assume that $ab$ is removable in $H_1$ and let $H_2=H_1\ominus ab$. In order to find the graph $G^\prime$, we need to initially state the following facts. By the proof of Lemma \ref{lem5}, we know that the vertex $b$ must be in the $5$-connected vertex pair of $H_1$ and $d_{H_1}(b)\geqslant5$. Now we show that if there exist at least five internally-disjoint paths between two vertices $b,v\in V(H_1)$, then for any set of five internally-disjoint paths connecting $b$ and $v$ in $H_1$, one path must contain the edge $ba$ and another path must contain the edge $bc$. For otherwise, we can obviously define at least five internally-disjoint paths between $b$ and $v$ in $G$, contradicting that $G$ is uniformly $4$-connected.
		
		Since $ab$ is removable in $H_1$ and $d_{H_1}(b)\geqslant5$, we now consider $d_{H_1}(a)$. If $d_{H_1}(a)\geqslant5$, then $H_2=H_1-ab$. Now we show that $H_2=G^\prime$, that is, $H_2$ is uniformly $4$-connected. By contradiction, we suppose that there exist five internally-disjoint paths between two vertices $u_2,v_2\in V(H_2)$. We can clearly observe that these paths also exist in $H_1$, thus $b\in \{u_2,v_2\}$ by the previous fact. However, since the edge $ab\notin E(H_2)$, it follows that $ab$ can not occur in any of these paths. This contradicts the previous fact. Hence $H_2=G^\prime$. Since we can construct $G$ by applying $\Delta_1^+$-operation to $S=\{a,b,c,y,bc\}$ in $H_2$, we can conclude by Lemma~\ref{lem7} that $S$ is a quasi $4$-compatible set in $H_2$.
		
		Now we consider the case that $d_{H_1}(a)=4$. Let $N_{H_1}(a)\setminus\{b\}=X_1=\{m,n,c\}$. Then we need to consider $H_1[X_1]$. If $H_1[X_1]\cong K_3$, then $H_2=H_1-a$. Since $H_2$ is $4$-connected, there exist at least four internally-disjoint paths between two vertices $m,c\in V(H_2)$. As the vertex $a$, the edges $am$ and $ac$ do not exist in $H_2$, we can define at least five internally-disjoint paths between two vertices $m$ and $c$ in $H_1$ by adding $mac$ to these paths. This contradicts the previous fact that $b$ is in the $5$-connected vertex pair of $H_1$.
		
		If $\vert E(H_1[X_1])\vert\leqslant2$. Repeating the above discussion, we can either find the uniformly $4$-connected graph $G^\prime$, or we can identify at least one removable edge which is incident to a vertex in a $5$-connected vertex pair as long as the order of the corresponding graph is at least $7$. Since we consider finite graphs, this process must terminate. If we have already found the graph $G^\prime$, then we can construct $G$ from the uniformly $4$-connected graph $G^\prime$ by applying a series of $\Delta_1^+$-operations to the corresponding sets. Moreover, we can conclude by Lemma~\ref{lem7} that the corresponding sets are quasi $4$-compatible sets in the corresponding graphs.
		
		Now we will illustrate how to solve the problem in the case that if there exists a sequence of $4$-connected graphs $G,H_1,\cdots,H_n$ such that it conforms to the following three conditions simultaneously:
		
		(1) $G\ominus e_0=H_1$ and $H_i\ominus e_{i}=H_{i+1}$, where $e_0\in E(G)$ and $e_{i}\in E(H_i)$ for $1\leqslant i\leqslant n-1$,
		
		(2) $G$ is uniformly $4$-connected, and $H_1,H_2,\cdots,H_n$ are not uniformly $4$-connected,
		
		(3) $\vert V(H_n)\vert\leqslant6$.
		
		Since $H_n$ is $4$-connected, $\vert V(H_n)\vert\geqslant5$. If $\vert V(H_n)\vert=5$, then $H_n\cong K_5\cong C_5^2$. Hence we assume that $\vert V(H_n)\vert=6$. Since $H_n$ is $4$-connected and not uniformly $4$-connected, it follows that $H_n$ is isomorphic to one of the following three graphs, $Oct^+, K_6\setminus e$ and $K_6$. If $H_n$ is isomorphic to $Oct^+$ or $K_6\setminus e$, then performing the operation of removable edges one more time will lead to $C_5^2$. Now we assume that $H_n\cong K_6$, which implies that any pair of vertices in $H_n$ is $5$-connected. Let $V(H_n)=\{a,b,c,d,e,f\}$, then $d$ and $e$ are $5$-connected in $H_n$. Observe that $H_{n-1}\ominus e_{n-1}=H_{n}$. Let $V(e_{n-1})=\{x_{n-1},y_{n-1}\}$. According to the previously mentioned principle of taking removable edges, we know that at least one endpoint of $e_{n-1}$ has degree at least $5$, say $d_{H_{n-1}}(y_{n-1})\geqslant 5$. Now we consider $d_{H_{n-1}}(x_{n-1})$. If $d_{H_{n-1}}(x_{n-1})\geqslant 5$, then $H_n=H_{n-1}-e_{n-1}$, this contradicts that $H_n\cong K_6$. Hence $d_{H_{n-1}}(x_{n-1})=4$. We have $N_{H_{n-1}}(x_{n-1})\subset V(H_n)$ obviously. Without loss of generality, we suppose  $N_{H_{n-1}}(x_{n-1})\setminus \{y_{n-1}\}=\{a,b,c\}=X_{n-1}$ and $y_{n-1}\in \{d,e,f\}$. Then by the similar arguments, we know that
		$H_{n-1}[X_{n-1}]\not\cong K_3$. If $\vert E(H_{n-1}[X_{n-1}])\vert=2$, then $H_n$ is uniformly $4$-connected, a contradiction. If $\vert E(H_{n-1}[X_{n-1}])\vert\leqslant 1$, then there exists a vertex $w\in \{a,b,c\}$ such that $w$ is in a $5$-connected vertex pair of $H_n$, which contradicts that $d$ and $e$ are $5$-connected in $H_n$. Consequently, $H_n\not\cong K_6$.

		If $\vert E(G[X])\vert=0$, by the similar arguments, we can obtain that there exists an edge $e\in \{ab,ac,bc\}$ is removable in $H_1$. The remaining process proceeds analogously to the previous discussion.
		
		\textbf{Case 3.} $d_G(x)=4, d_G(y)=4$.
		Let $N_G(x)\setminus\{y\}=X=\{a,b,c\}, N_G(y)\setminus \{x\}=Y=\{d,e,f\}$. Note that $\vert X\cap Y\vert \leqslant 2$ as $G$ is $4$-connected. By the similar arguments to the Case $2$, we know that $G[X]\not\cong K_3, G[Y]\not\cong K_3$. If $G[X]\cong P_3$ and $G[Y]\cong P_3$, then $H_1=G\ominus e$ must be uniformly $4$-connected. Thus $G$ can be constructed from the uniformly $4$-connected graph $H_1$ by applying $\Delta_2^+$-operation to the corresponding set, which is therefore a quasi $4$-compatible set in $H_1$ by Lemma~\ref{lem8}. If $\vert E(G[X])\vert\leqslant2,\vert E(G[Y])\vert\leqslant2$, but $\vert E(G[X])\vert=2$ and $\vert E(G[Y])\vert=2$ do not hold simultaneously, the proof is similar to the Case $2$.		
	\end{proof}

\end{document}